\patchcmd{\@maketitle}{\LARGE \@title}{\fontsize{16}{19.2}\selectfont\@title}{}{}
\newcommand{\indic}{\mathbbm{1}}
\newcommand{\tsim}{\mbox{\tiny$\sim$}}
\newcommand{\caseone}{High collision regime~}
\newcommand{\casetwo}{Low collision regime~}
\title{Convergence of Chao Unseen Species Estimator}
\author{%
  Nived Rajaraman, Prafulla Chandra, Andrew Thangaraj\\
  Department of Electrical Engineering\\
  Indian Institute of Technology Madras\\
  Chennai 600036, India\\
  Email: \{ee14b040,ee16d402,andrew\}@ee.iitm.ac.in
  \and
  Ananda Theertha Suresh\\
  Google Research\\
  New York, USA\\
  Email: \{theertha\}@google.com
}
\pgfplotsset{compat=1.14}
\tikzstyle{bipartite}=[thick,
\def\addlegendimage{\csname pgfplots@addlegendimage\endcsname}
\tikzstyle{centerlabel}=[fill=white,
\newtheorem{theorem}{Theorem}
\newtheorem{definition}[theorem]{Definition}
\newtheorem{lemma}[theorem]{Lemma}
\theoremstyle{remark}
\newcommand{\sigmaAC}{\sigma_{\mathrm{Chao}}}
\newcommand{\poly}{\mathrm{poly}}
\newcommand{\linear}{\mathrm{linear}}
\newcommand{\diff}{\mathrm{d}}
\newcommand{\EE}{\mathbb{E}}
\def\bb0{{\mathbb{0}}}
\def\bb{{\mathbf{b}}}
\def\b0{{\mathbf{0}}}
\def\b1{{\mathbf{1}}}
\def\bbE{{\mathbb{E}}}
\def\bbN{{\mathbb{N}}}
\def\cE{\mathcal{E}}
\def\cX{\mathcal{X}}
\def\sf0{{\mathsf{0}}}
\begin{document}

\maketitle

\footnote{This work was presented in part at the 2019 IEEE International Symposium on Information Theory (ISIT 2019) \cite{ISIT19}}

\begin{abstract}
Support size estimation and the related problem of unseen species estimation have wide applications in ecology and database analysis. Perhaps the most used support size estimator is the Chao estimator. Despite its widespread use, little is known about its theoretical properties. We analyze the Chao estimator and show that its worst case mean squared error (MSE) is smaller than the MSE of the plug-in estimator by a factor of $\mathcal{O}((k/n)^4)$, where $k$ is the maximum support size and $n$ is the number of samples. Our main technical contribution is a new method to analyze rational estimators for discrete distribution properties, which may be of independent interest.
\end{abstract}

\section{Introduction}

\noindent Given independent samples from an underlying unknown distribution, we consider the problem of estimating the support size of the distribution. Estimating the support size and unseen species estimation has applications in ecological diversity~\cite{C84,SV84,SCL03,C05,CCG12}, vocabulary size estimation~\cite{ET76,TE87}, database attribute variation~\cite{HNSS95}, password analysis~\cite{FH07}, and, recently, in modern applications such as microbial diversity~\cite{HHJTB01,PBG01,GCB07} and genome sequencing~\cite{daley2013predicting}.

Formally, let $P$ denote the unknown distribution over domain $\mathcal{X}$. Upon observing $N$ independent samples $X_1,X_2,\ldots, X_N \stackrel{\text{def}}{=} X^N$ from $P$, the goal is to estimate the support size,
\begin{equation*}
S(P) \stackrel{\text{def}}{=} \sum_{x \in \mathcal{X}} \mathbbm{1}_{p_x > 0}.
\end{equation*}
Let $N_x(X^N)$ be the number of occurrences of symbol $x$ in $X^N$. The simplest estimator is the \emph{plug-in} or the \emph{empirical} estimator, which estimates $S(P)$ by
\begin{equation}
 \label{eq:plug_def}
\hat{S}^\text{pl}(X^N) \stackrel{\text{def}}{=} \sum_{x \in \mathcal{X}} \mathbbm{1}_{N_x(X^N) > 0}.
\end{equation}
The plug-in estimator often performs poorly in the non-asymptotic regime, where $N \approx S(P)$. To overcome this, several estimators have been proposed, including the Efron-Thisted estimator~\cite{ET76}, the Chao estimator~\cite{C84}, and, more recently, a near-optimal estimator via linear programming \cite{VV11,VV13} and an optimal linear estimator via Chebyshev polynomials~\cite{WY14b}.

Of the above, perhaps the most used estimator is the Chao estimator which has seen wide usage in ecological~\cite{C84} and microbiological~\cite{LEMOS201142} applications among others. Despite its widespread use, apart from the analysis of the expectation of the estimator in the original paper~\cite{C84}, not much is known about its theoretical properties. In this paper, we analyze the Chao estimator and provide bounds on its worst case \emph{mean squared error} (MSE). In the next section, we state the problem definition and the statistical model.

\subsection{Preliminaries and Notation}

In general, support size estimation is an ill-posed problem as there might be a large set of symbols with infinitesimally small probability, which can never be detected with any finite number of samples. To overcome this, following~\cite{RRSS09, VV11, WY14b}, we focus on distributions where every non-zero probability is lower-bounded. Formally, we restrict ourselves to $\Delta_k$, the set of distributions such that all non-zero symbols have probability $\geq 1/k$. By the law of total probability, distributions in $\Delta_k$ have support size upper-bounded by $k$.

Support size estimation has been studied in a number of different statistical models, including multinomial~\cite{GT56}, Poisson, and Bernoulli-product models~\cite{CCG12}.
Following~\cite{C84, orlitsky2016optimal}, we study the problem in the Poisson sampling model, where the number of observed samples $N$ is a Poisson random variable with known mean $n$. Under Poisson sampling, the multiplicities of symbols $N_x(X^N)$, $x\in\mathcal{X}$, are independent random variables, and $N_x(X^N)$ is Poisson with mean $n p_x$. The 
independence of multiplicities comparatively simplifies the MSE analysis. We believe similar results should hold for the other above stated statistical models.

For a distribution $P$ and an estimator $\hat{S}(X^N)$, we measure the performance of the estimator in terms of MSE, given by
\begin{equation}
\label{eq:spmse}
\cE_n(\hat{S}, P) \stackrel{\text{def}}{=} \bbE_{X^N \sim P} (S(P) - \hat{S}(X^N))^2,
\end{equation}
and the worst case MSE over all distributions is
\begin{equation*}
\cE_{n,k}(\hat{S}) \stackrel{\text{def}}{=} \max_{P \in \Delta_k} \cE_n(\hat{S}, P).
\end{equation*}
The simple plug-in estimator only takes into account the number of seen symbols and does not try to predict the symbols that are not observed yet. In this context, Efron-Thisted~\cite{ET76} and Chao~\cite{C84}, observed that support size estimation is closely related to the problem of unseen species estimation, where the goal is to estimate the number of
symbols that have not yet appeared and will appear in the future,
\begin{equation*}
U(X^N, P) \stackrel{\text{def}}{=} \sum_{x \in \mathcal{X}} \mathbbm{1}_{p_x > 0}
\mathbbm{1}_{N_x = 0}.
\end{equation*}
Given an estimator $\hat{U}(X^N)$ for $U(X^N, P)$, one can estimate the support size via
\begin{equation}
\label{eq:utos}
\hat{S}^{\text{pl}}(X^N) + \hat{U}(X^N).
\end{equation}
Let the \emph{prevalence} or \emph{finger-print} $\varphi_i(X^N)$ denote the number of symbols with non-zero probability that appeared $i$ times. For $i \geq 1$
\begin{equation*}
\varphi_i(X^N) \stackrel{\text{def}}{=} \sum_{x \in \mathcal{X}} \mathbbm{1}_{N_x = i},
\end{equation*}
and, for $i = 0$, $\varphi_0(X^N,P) \stackrel{\text{def}}{=} \sum_{x \in \mathcal{X}} \mathbbm{1}_{N_x = 0} \mathbbm{1}_{p_x >0}$. With this notation, $S(P) = \varphi_0(X^N,P)+\sum_{i \geq 1} \varphi_i(X^N)$, the plug-in estimator, $\hat{S}^{\text{pl}} = \sum_{i \geq 1} \varphi_i(X^N)$, and $U(X^N, P) = \varphi_0(X^N,P)$. Hence, for estimators of the form ~\eqref{eq:utos},
\begin{equation*}
S(P) -\hat{S} = \varphi_0(X^N,P) - \hat{U}(X^N),
\end{equation*}
and the error in estimating the support is same as the error in estimating the unseen symbols. Similar to \eqref{eq:spmse}, we define the worst case mean squared error in estimating the unseen symbols by
\begin{equation*}
\cE_{n,k}(\hat{U}) = \max_{P \in \Delta_k} \bbE_{X^N \sim P} (\hat{U}(X^N) - \varphi_0(X^N,P))^2,
\end{equation*}
and, hence, for the support estimator $\hat{S} = \hat{S}^{\text{pl}} + \hat{U}$,
\begin{equation*}
\cE_{n,k}(\hat{S}) = \cE_{n,k}(\hat{U}).
\end{equation*}
Chao~\cite{C84} proposed the following estimator to estimate the number of unseen symbols\footnote{We use $N_x$ and $\varphi_i$ to abbreviate $N_x(X^N)$ and $\varphi_i(X^N)$ for simplicity.},
\begin{equation*}
\hat{U}^{\text{c}}(X^N) = \frac{\varphi^2_1}{2 \varphi_2},
\end{equation*}
which has a rational form and is not in the class of linear estimators. To understand the Chao estimator, first observe that $\varphi_i = \sum_{x \in \mathcal{X}} \mathbbm{1}_{N_x = i}$. Since $N_x$ is a Poisson random variable with mean $np_x$, 
\begin{equation*}
\bbE[\varphi_i] = \sum_{x \in \mathcal{X}} e^{-np_x} \frac{(np_x)^i}{i!}.
\end{equation*}
By the Cauchy-Schwarz inequality,
\begin{align}
\bbE[\varphi_0] \cdot \bbE[\varphi_2] & = \left(\sum_{x \in \mathcal{X}} e^{-np_x} \right) \cdot \left(\sum_{x \in \mathcal{X}} e^{-np_x} \frac{(np_x)^2}{2!} \right), \nonumber \\
& \geq \left(\sum_{x \in \mathcal{X}} e^{-np_x} \frac{(np_x)}{\sqrt{2!}} \right)^2 = \frac{(\bbE[\varphi_1])^2}{2}. \label{eq:cauchy}
\end{align}
Hence, $\frac{\bbE[\varphi_1]^2}{2\bbE[\varphi_2]} \leq \bbE[\varphi_0]$, and thus is a lower bound on the expected number of unseen symbols. Since expectations are not available, Chao \cite{C84} proposed to use $\frac{\varphi^2_1}{2\varphi_2}$ as an estimator for $\varphi_0$.

\section{Main Results}

Before we state results for the Chao estimator, we first state a folklore result on the performance of the plug-in estimator. 
\begin{lemma}
\label{lem:plugin}
For the plug-in estimator $\hat{S}^{\text{pl}}$ defined in ~\eqref{eq:plug_def}, 
\begin{equation*}
k^2 e^{-2n/k} + k e^{-n/k} \ge \cE_{n,k}(\hat{S}^{\text{pl}}) \ge k^2 e^{-2n/k} + k e^{-n/k} - k e^{-2n/k}.
\end{equation*}
\end{lemma}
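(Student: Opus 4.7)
The plan is to compute the MSE in closed form using the Poisson independence structure, then bound it above and below separately. Writing $S(P) - \hat S^{\text{pl}}(X^N) = \sum_{x : p_x > 0} \mathbbm{1}_{N_x = 0}$, and recalling that under Poisson sampling the multiplicities $\{N_x\}$ are independent with $N_x \sim \mathrm{Poi}(np_x)$, the random variable $S(P) - \hat S^{\text{pl}}$ is a sum of independent Bernoullis with $\mathbb{P}(N_x = 0) = e^{-np_x}$. Therefore
\begin{equation*}
\mathcal{E}_n(\hat S^{\text{pl}}, P) = \left( \sum_{x : p_x > 0} e^{-np_x} \right)^{\!2} + \sum_{x : p_x > 0} e^{-np_x}\bigl(1 - e^{-np_x}\bigr),
\end{equation*}
i.e., bias-squared plus variance. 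Both quantities now depend on $P$ only through the function $f(p) = e^{-np}$ evaluated at the non-zero probabilities.

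For the upper bound I would use the constraint $P \in \Delta_k$ directly: every non-zero $p_x$ satisfies $p_x \geq 1/k$, so $e^{-np_x} \leq e^{-n/k}$, and the number of such symbols is at most $k$. Hence $\sum_{x : p_x > 0} e^{-np_x} \leq k e^{-n/k}$, which bounds the bias-squared term by $k^2 e^{-2n/k}$ and the variance term by $\sum_x e^{-np_x} \leq k e^{-n/k}$ (dropping the non-positive $-e^{-2np_x}$ contribution). Adding gives the stated upper bound $k^2 e^{-2n/k} + k e^{-n/k}$.

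For the lower bound I would simply evaluate $\mathcal{E}_n(\hat S^{\text{pl}}, P)$ at the uniform distribution on $k$ symbols, $p_x = 1/k$, which lies in $\Delta_k$ and therefore lower-bounds the worst-case MSE. At this distribution, the bias is $k e^{-n/k}$ and the variance is $k e^{-n/k}(1 - e^{-n/k}) = k e^{-n/k} - k e^{-2n/k}$, which together match the claimed expression $k^2 e^{-2n/k} + k e^{-n/k} - k e^{-2n/k}$ exactly.

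There is no real obstacle here; the only step requiring any care is ensuring that in the upper bound the termwise estimate $e^{-np_x} \leq e^{-n/k}$ together with the support-size bound $|\{x : p_x > 0\}| \leq k$ (which follows from $p_x \geq 1/k$ and $\sum_x p_x = 1$) is enough to control both the bias-squared and variance pieces simultaneously, and this is precisely what makes the uniform distribution the extremal case. The slack between the upper and lower bounds is exactly the $-k e^{-2n/k}$ term that was dropped when replacing $e^{-np_x}(1 - e^{-np_x})$ by $e^{-np_x}$.
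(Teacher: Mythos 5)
Your proposal is correct and follows essentially the same route as the paper: the same bias-squared-plus-variance decomposition via Poisson independence, the uniform distribution for the lower bound, and the $p_x \ge 1/k$ constraint for the upper bound. The only cosmetic difference is that the paper invokes convexity of $p \mapsto e^{-np}$ for the upper bound, whereas you use the equally valid (and arguably more direct) termwise bound $e^{-np_x} \le e^{-n/k}$ together with the support-size bound $|\{x : p_x > 0\}| \le k$.
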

\begin{proof}
For any distribution $p \in \Delta_k$, let
\begin{align*}
S(P) - \hat{S}^{\text{pl}} = \varphi^0 = \sum_{x \in \cX} \indic_{N_x = 0}.
    \end{align*}
Hence,    
    \begin{align*}
 \EE[(S(P) - \hat{S}^{\text{pl}}   )^2] 
 & = \EE[(\sum_{x \in \cX} \indic_{N_x = 0})^2] \\
 & \stackrel{(a)}{=} \EE^2[(\sum_{x \in \cX} \indic_{N_x = 0})] + \text{Var}\left(\sum_{x \in \cX} \indic_{N_x = 0}\right)\\
 & \stackrel{(b)}{=} \EE^2[(\sum_{x \in \cX} \indic_{N_x = 0})] + \sum_{x \in \cX} \text{Var}( \indic_{N_x = 0}) \\
 & \stackrel{(c)}{=} (\sum_{x \in \cX} e^{-np_x})^2 + \sum_{x \in \cX} e^{-np_x} (1-e^{-np_x}),
    \end{align*}
    where $(a)$ follows from the definition of bias and variance, $(b)$ follows from the fact that variance of sum of independent random variables is the sum of variance of independent random variables, and $(c)$ follows from the fact that $\indic_{N_x = 0}$ is a Bernoulli random variable with parameter $e^{-np_x}$. The lower bound follows by substituting $p$ to be the uniform distribution over $k$ elements and the upper bound follows by the convexity of the function $p \to e^{-np}$.
\end{proof}
\noindent Observe that the Chao estimator is undefined if $\varphi_2 = 0$. To circumvent this, we consider the closely related \emph{modified Chao estimator},
\begin{equation*}
\hat{U}^{\text{mc}} (X^N) = \frac{\varphi_1^2}{2 (\varphi_2 + 1)}.
\end{equation*}
The analysis of MSE for the Chao estimator and the modified Chao estimator are involved, as they are rational functions over the prevalences. Furthermore, the prevalences are dependent on each other. By developing new tools to analyze the expectation of ratios of functions of prevalences, we show the following.
\begin{theorem} 
\label{theorem:main:MSE}
For the modified Chao estimator,
	\begin{equation*}
\cE_{n,k} (\hat{U}^{\text{mc}}) \le k^2 \left( \frac{1}{1 + n/(k\alpha)}\right)^4 e^{-2n/k} + \epsilon(n,k),
	\end{equation*}
	where $\alpha=0.5569...$ solves $u^2=4e^{-2}e^{-u}$ and
	\begin{align*}
	\epsilon(n,k) = \left( \frac{4k^4}{\left(n^{4/5} - \sqrt{4/\pi}\right)^3} + \frac{(32.28)k^4}{n^{12/5}} + \frac{(98.97)k^3}{n^{11/5}} + \frac{2k^2}{n^{6/5}} + \frac{(1.77)k}{n^{1/5}} + \frac{(22.21)k^2}{n^2} \right).
	\end{align*}
\end{theorem}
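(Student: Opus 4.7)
The natural starting point is the bias--variance decomposition
\begin{equation*}
\cE_n(\hat{U}^{\text{mc}},P) = \bigl(\bbE[\varphi_0 - \hat{U}^{\text{mc}}]\bigr)^2 + \mathrm{Var}(\hat{U}^{\text{mc}} - \varphi_0),
\end{equation*}
with the squared bias producing the main $k^2(1+n/(k\alpha))^{-4}e^{-2n/k}$ term and the variance absorbed into $\epsilon(n,k)$. The central obstacle is that $\hat{U}^{\text{mc}} = \varphi_1^2/(2(\varphi_2+1))$ is rational in dependent statistics, so moments do not factor. My key device would be the integral representation
\begin{equation*}
\frac{1}{(\varphi_2+1)^m} = \int_{[0,1]^m} \prod_{i=1}^m t_i^{\varphi_2}\,dt_1\cdots dt_m,\qquad m=1,2,
\end{equation*}
which, combined with Poissonization ($N_x \perp N_y$ for $x\ne y$) and the factorization $t^{\varphi_2}=\prod_x t^{\indic_{N_x=2}}$, reduces every moment of $\hat{U}^{\text{mc}}$ to an integral of a product of elementary per-symbol Poisson expectations such as $\bbE[t^{\indic_{N_x=2}}]=1-(1-t)e^{-np_x}(np_x)^2/2$. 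This is the general-purpose tool for rational estimators announced in the abstract.

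\textbf{Bias.} Applying the integral device to $\bbE[\hat{U}^{\text{mc}}]$ yields, up to an $O(1/n)$ correction, the deterministic surrogate $\bbE[\varphi_1]^2/(2(\bbE[\varphi_2]+1))$. The Cauchy--Schwarz chain \eqref{eq:cauchy} shows this surrogate underestimates $\bbE[\varphi_0]$, so the bias is non-negative. To obtain the worst case I would substitute $\bbE[\varphi_i]=\sum_x e^{-np_x}(np_x)^i/i!$ and Lagrange-maximize the gap over $P\in\Delta_k$ subject to $\sum_x p_x=1$ and $p_x\ge 1/k$. The first-order optimality condition forces the interior symbols to share a common mass $u/n$ with $u$ satisfying $u^2=4e^{-2}e^{-u}$, which is exactly the defining equation of $\alpha$, and produces the claimed main term $k^2(1+n/(k\alpha))^{-4}e^{-2n/k}$ after squaring.

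\textbf{Variance.} For $\mathrm{Var}(\hat{U}^{\text{mc}} - \varphi_0)$ the easy pieces are $\mathrm{Var}(\varphi_0)\le\sum_x e^{-np_x}\le k$ and a cross-covariance controllable by Cauchy--Schwarz once $\mathrm{Var}(\hat{U}^{\text{mc}})$ is known. The heart of the calculation is bounding $\bbE[\varphi_1^4/(\varphi_2+1)^2]$, which the integral representation turns into a double integral of $\prod_x$-products weighted by the partition structure of a $4$-tuple of indices from $\varphi_1^4$. To keep the estimate uniform in $P$ I would split into the \caseone ($n/k$ large, where $\varphi_2$ concentrates and Chebyshev on the event $\{\varphi_2\ge\tfrac12\bbE[\varphi_2]\}$ lets one replace $1/(\varphi_2+1)$ by its mean, controlling the complement with the crude bound $\hat{U}^{\text{mc}}\le\varphi_1^2/2\le k^2/2$) and the \casetwo ($n/k$ small, where the worst-case MSE is already $O(k^2)$ and rougher bounds suffice). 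The fractional $1/5$-exponents in $\epsilon(n,k)$ arise from optimizing the Chebyshev threshold for $\varphi_2$ against the $k^4$ cost of the crude bound on the bad event.

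\textbf{Main obstacle.} The technical bottleneck is the uniform-in-$P$ control of $\bbE[\varphi_1^4/(\varphi_2+1)^2]$ while keeping lower order terms at the level recorded in $\epsilon(n,k)$. Expanding $\varphi_1^4=(\sum_x\indic_{N_x=1})^4$ into set-partition classes over $4$-tuples produces several contributions, each paired with the product $\prod_x(1-(1-ts)e^{-np_x}(np_x)^2/2)$ coming from $(ts)^{\varphi_2}$ after the double integral; the near-cancellations between the factor coming from the selected indices and the remaining product must be tracked carefully so as not to lose extra powers of $k$ when summed over $\Delta_k$. The specific constants in $\epsilon(n,k)$ emerge from optimizing these bookkeeping bounds.
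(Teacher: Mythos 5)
Your integral device for $1/(\varphi_2+1)^m$ is essentially the paper's own tool (its Lemma~\ref{lem:zX} integrates the characteristic polynomial $\EE[z^X]$ to get $\EE[\prod_{j\le r}(X+j)^{-1}]\le \EE[X]^{-r}$), and your Lagrange-style maximization of the surrogate bias is in the spirit of the paper's Lemma~\ref{lemma:high:0021}. But there are two genuine gaps. First, your case split is on $n/k$, whereas the real dichotomy has to be on $\EE[\varphi_2]$ itself (the paper splits at $\EE[\varphi_2]\gtrless n^{4/5}$): for every value of $n/k$ there are distributions in $\Delta_k$ with $\EE[\varphi_2]$ arbitrarily small (e.g.\ all masses well above $1/k$ when $n/k$ is large), and for those the step ``replace $1/(\varphi_2+1)$ by its mean on a concentration event'' is vacuous. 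Conversely, in your ``low'' regime the claim that ``the worst-case MSE is already $O(k^2)$ and rougher bounds suffice'' does not meet the theorem: pointwise, $\hat{U}^{\text{mc}}$ can be as large as $\varphi_1^2/2\le k^2/2$, so crude bounds give order $k^4$, and even an exact $k^2$ bound is far above the claimed $k^2e^{-2n/k}(1+n/(k\alpha))^{-4}+\epsilon(n,k)$ once $n$ is comparable to $k$ (at $n=k/2$ the theorem gives $\approx 0.03\,k^2$). Handling small $\EE[\varphi_2]$ is precisely where the paper needs a new idea: negative regression of the prevalences (Theorems~\ref{theorem:NR:002}--\ref{theorem:NR:003} and Lemma~\ref{lemma:appB:001}) to bound $\EE[\varphi_1^4/(\varphi_2+1)^2]$ by $\EE[\varphi_1^4\mid\varphi_2=0]\,\EE[(\varphi_2+1)^{-2}]$, together with $\EE[\varphi_0]\le 2(k/n)^2\EE[\varphi_2]$ and $\EE[\varphi_1]\le 2(k/n)\EE[\varphi_2]$, showing both the unseen count and the estimate are small \emph{in terms of} $\EE[\varphi_2]$. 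Nothing in your plan supplies this.

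Second, even in the large-$\EE[\varphi_2]$ regime your quantitative mechanism is too weak: Chebyshev on $\{\varphi_2\ge\tfrac12\EE[\varphi_2]\}$ gives a bad-event probability of order $1/\EE[\varphi_2]$, so paired with the $k^4$ crude bound you get an error term of order $k^4/\EE[\varphi_2]\approx k^4/n^{4/5}$, which is not $o(k^2)$ for $n=\Theta(k)$; the paper instead gets $k^4/(\EE[\varphi_2]-4\sigmaAC)^3$ from the moment inequality above (a Chernoff bound could rescue your version, but then the stated constants and $1/5$-exponents in $\epsilon(n,k)$ would not arise from ``optimizing the Chebyshev threshold''). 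Relatedly, the bias--variance split does not simplify matters, since $\mathrm{Var}(\hat{U}^{\text{mc}})$ is itself a rational second moment; and once the decoupling replaces $\EE[\varphi_1]^2$ by $\EE[\varphi_1^2]$, your one-sided Cauchy--Schwarz argument no longer controls the sign of the surrogate bias --- the paper needs the two-sided bound of Lemma~\ref{lemma:high:0021}, including the $+k/n$ upper bound obtained from $\EE[\varphi_1^2]\le\EE[\varphi_1]^2+\EE[\varphi_1]$ and $\EE[\varphi_1]\le\tfrac{2k}{n}\EE[\varphi_2]$.
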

For the non-asymptotic regime of interest, where $n = \Omega(k)$, $\epsilon(n,k)$ is $o(k^2)$ and the first term dominates. Hence, for $n = \Omega(k)$, the Chao estimator has better worst case MSE than the plug-in estimator. Furthermore, when $n \geq k$, the worst case MSE of the Chao estimator is at least a factor $(k/n)^4$ lower than the worst case MSE of the plug-in estimator (Lemma~\ref{lem:plugin}) and, for $n \ll k$, the worst case performance of the Chao estimator approaches that of the plug-in estimator. 

We note that the best estimator for support size and the unseen species problem achieves the worst case MSE
\begin{equation*}
\min_{\hat{S}}\ \cE_{n,k}(\hat{S}) = k^2 \cdot \exp \left(- \Theta \left( \sqrt{\frac{n \log k}{k}} \vee
\frac{n}{k} \vee 1 \right) \right),
\end{equation*}
and is achieved by the \emph{Chebyshev linear estimator}~\cite{WY14b}, obtained by the approximation properties of Chebyshev polynomials.

An empirical comparison of three estimators: plug-in, Chao, and Chebyshev estimators on synthetic data is shown in Fig.~\ref{fig:1}. The Chebyshev estimator is
parameterized by constants $c_0$ and $c_1$, which we choose as $0.45$ and $0.5$ as suggested in \cite{WY14b}. The distributions are chosen from $\Delta_k$ with $k = 10^4$. We consider (i) the uniform distribution on $k$ symbols, (ii) the $\mathrm{Zipf}(1)$ distribution with probability of the $i^{th}$ symbol proportional to $i^{-1}$, (iii) the geometric distribution with probability of the $i^{th}$ symbol proportional to $\alpha^{i-1}$ where $\alpha = 1 - k^{-1}$, and (iv) an even mixture of two uniform distributions, with probability of half of the symbols as $k^{-1}$ and the other half as $3 k^{-1}$. From Fig. \ref{fig:1}, the convergence rate of the modified Chao estimator is seen to be higher than the plug-in estimator over the distributions we considered. However, with the exception of the uniform distribution, the Chebyshev estimator outperforms the modified
Chao estimator. In the rest of the paper, we provide a proof of Theorem \ref{theorem:main:MSE}.
\begin{figure}[ht]
\begin{subfigure}[h]{0.5\textwidth}
                \includegraphics[width=\linewidth]{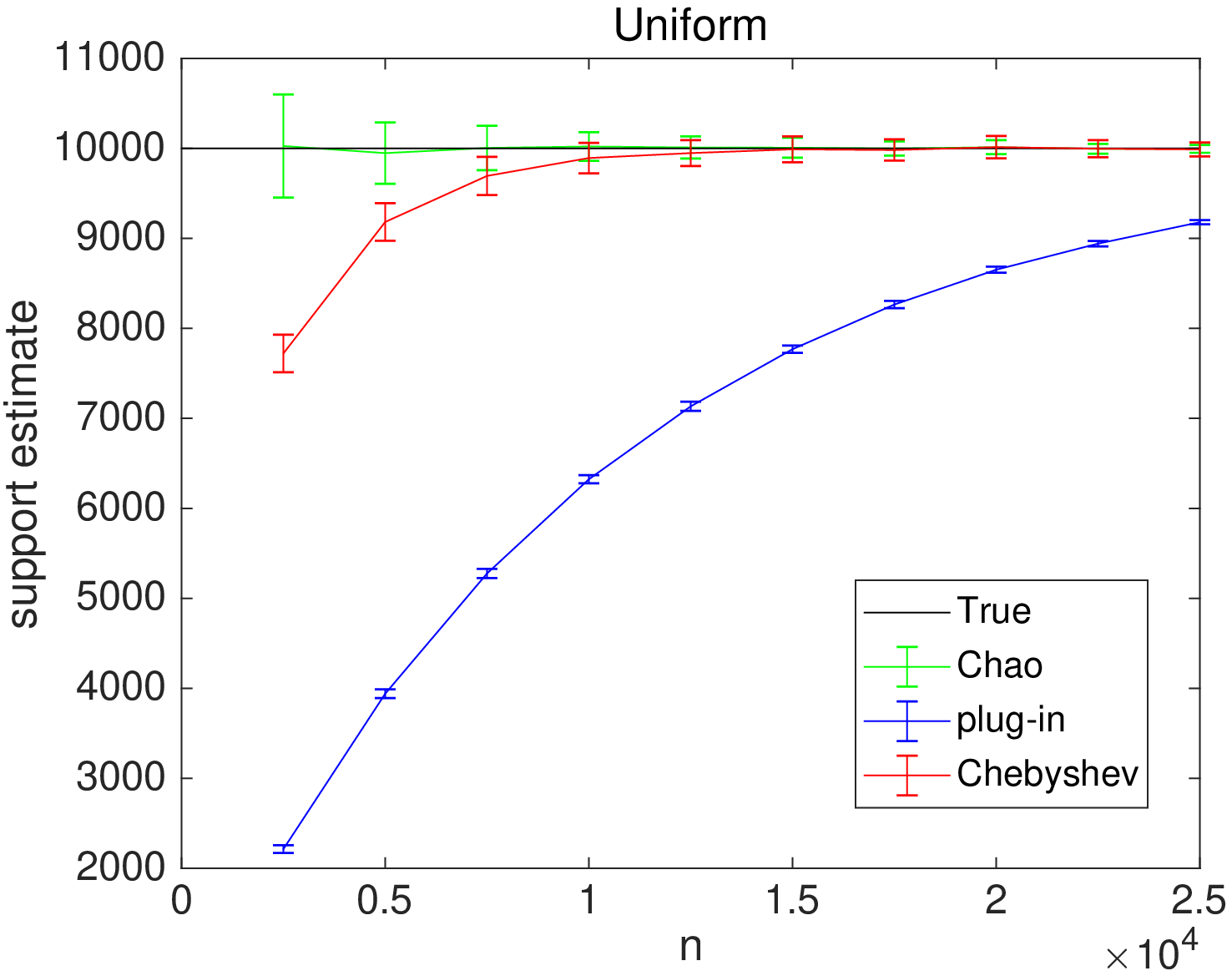}
                \label{fig:uniform}
        \end{subfigure}%
        \begin{subfigure}[h]{0.5\textwidth}
                \includegraphics[width=\linewidth]{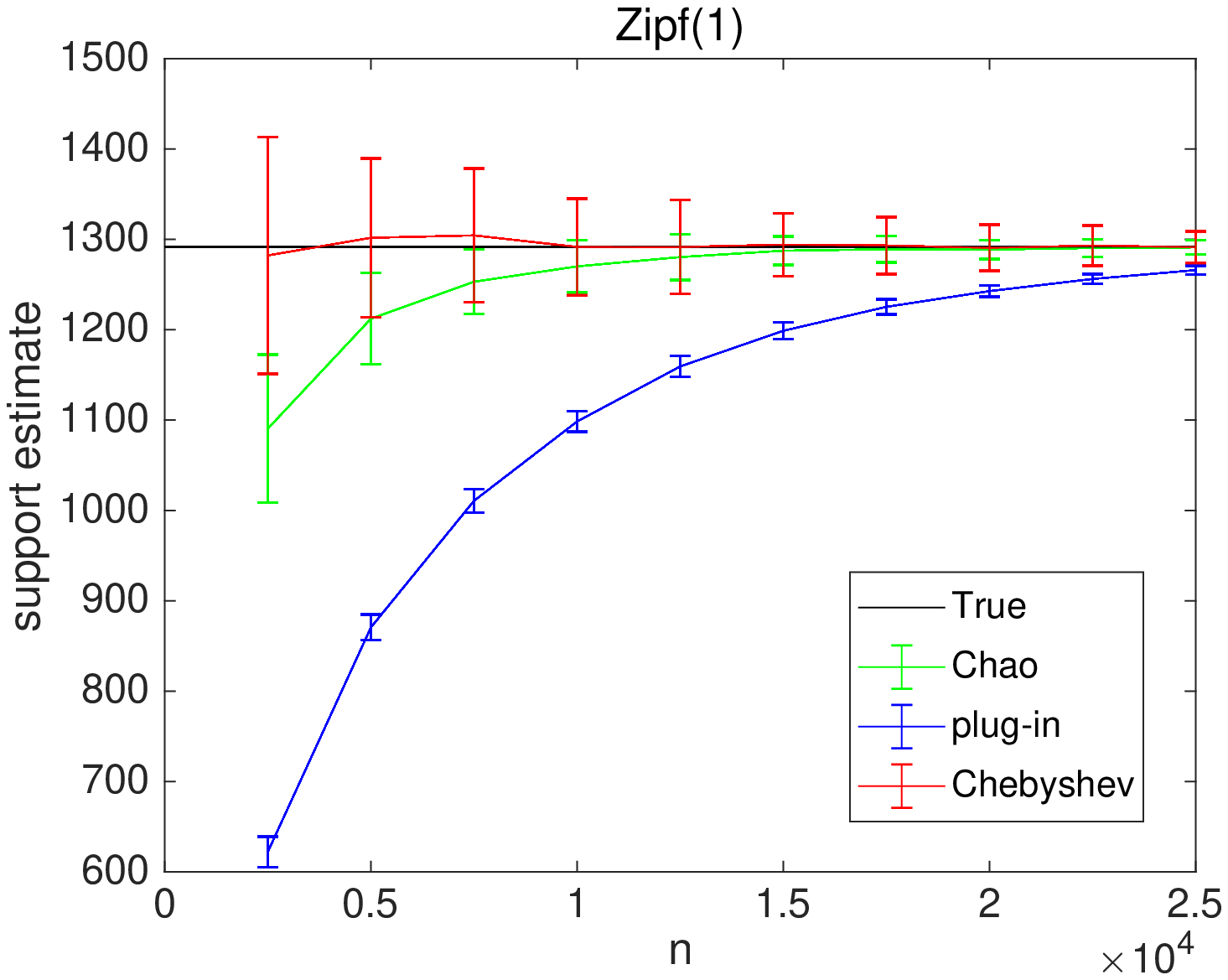}
                \label{fig:zipf1}
		\end{subfigure}\\%
        \begin{subfigure}[h]{0.5\textwidth}
                \includegraphics[width=\linewidth]{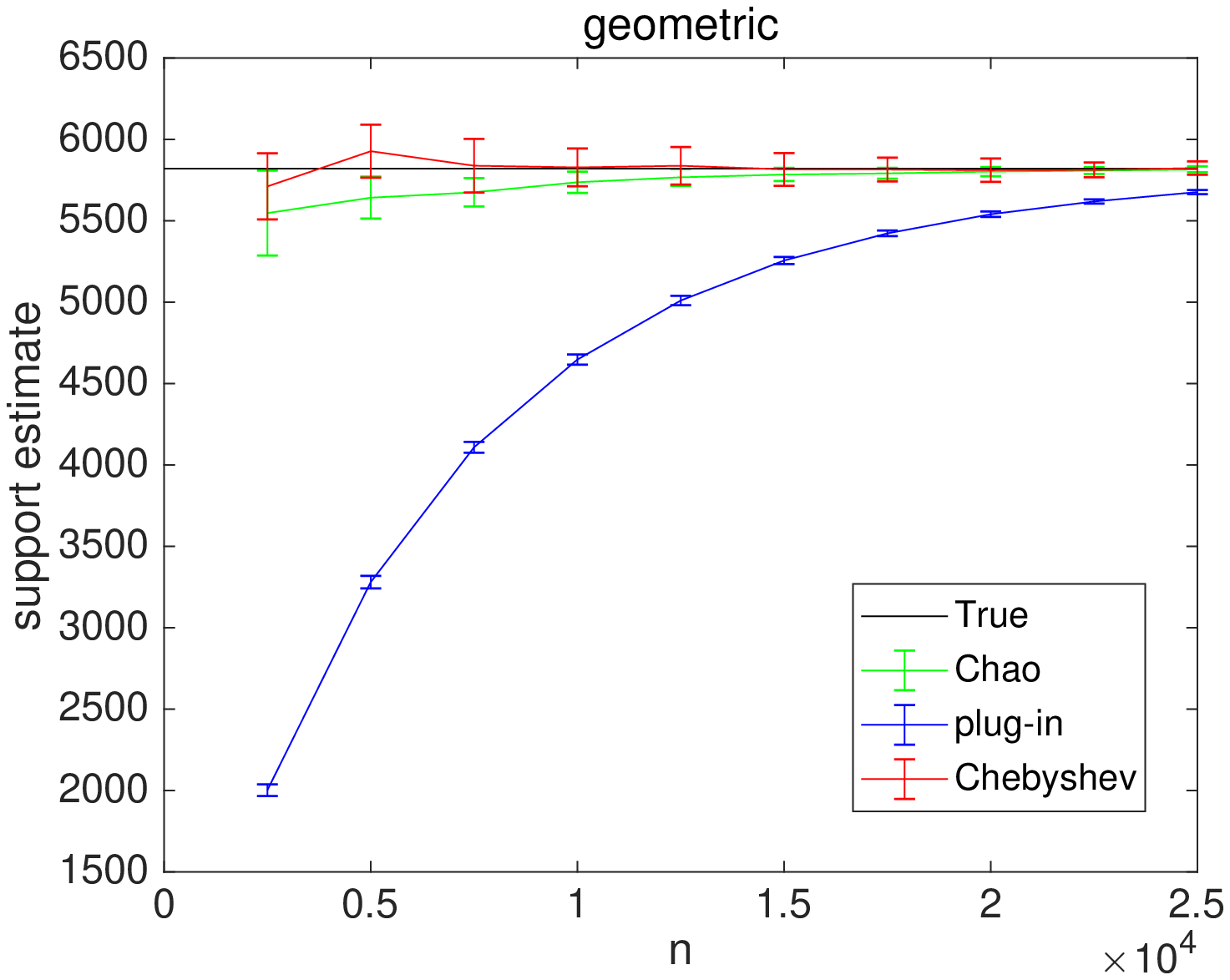}
                \label{fig:geo}
		\end{subfigure}%
        \begin{subfigure}[h]{0.5\textwidth}
                \includegraphics[width=\linewidth]{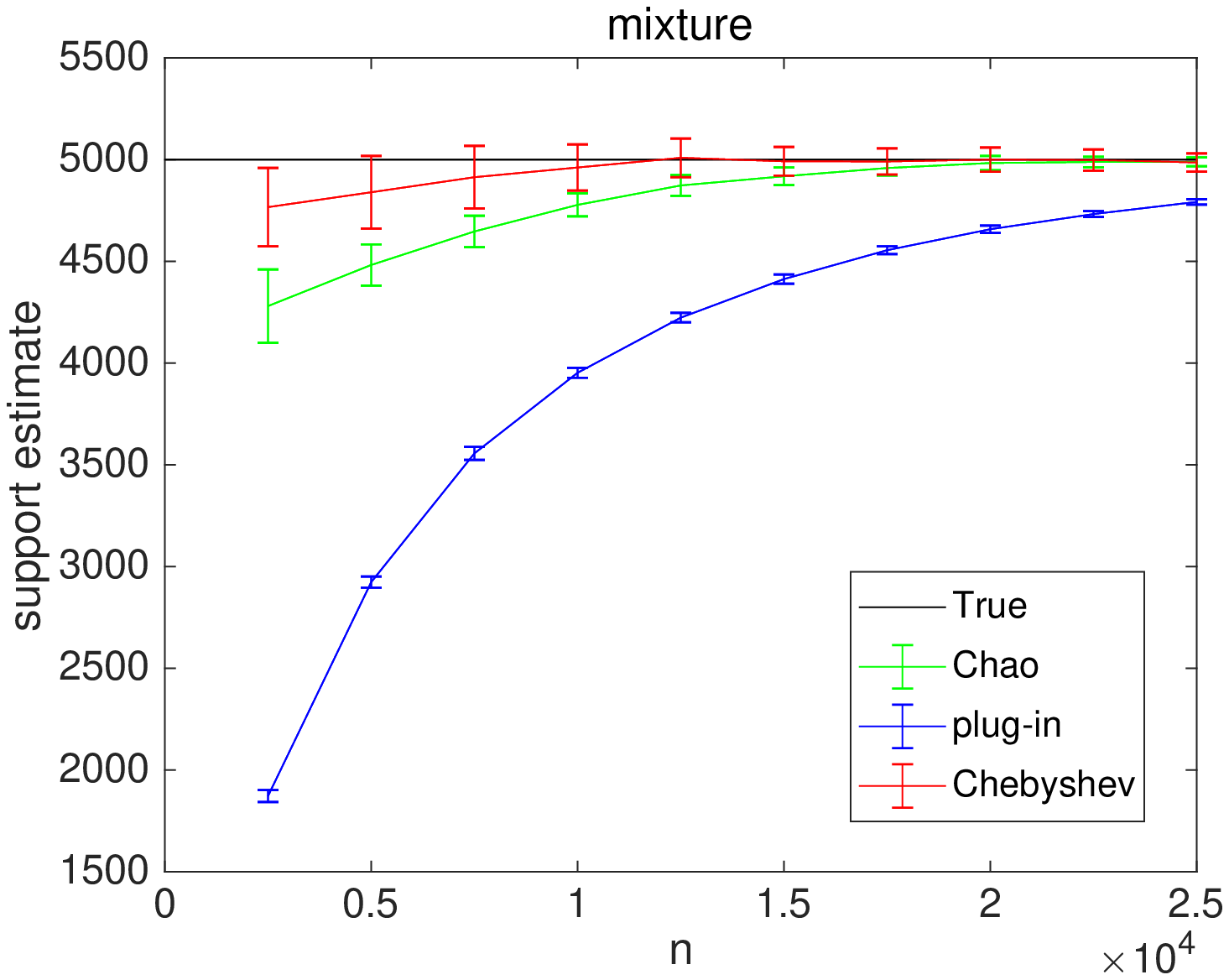}
                \label{fig:unimix}
		\end{subfigure}%
\caption{Comparison of plug-in, Chao and Chebyshev estimators over various distributions.}
\label{fig:1}
\end{figure}

\section{Analysis of the Chao estimator} \label{sec:proof-sketch}
    The MSE of the modified Chao estimator can be written as
    \begin{equation}
    	\cE(\hat{U}^{\text{mc}}, P)=\mathbb{E} \left[ \left(\frac{\varphi_1^2}{2(\varphi_2+1)} \right)^2
              \right] - \mathbb{E} \left[ \frac{\varphi_0
                \varphi_1^2}{\varphi_2+1} \right] + \mathbb{E} \left[ \varphi_0^2
              \right].
              \label{eq:MSE3term}
    \end{equation}
    Analyzing the above quantity is difficult as it involves rational
    functions of prevalences. A natural question to ask is how good are the
    approximations:
    \begin{equation}
    \begin{aligned}
    \label{eq:approx_special1}
    \EE \left[ \left(\frac{\varphi_1^2}{2(\varphi_2+1)} \right)^2\right] &\approx
    \left(\frac{\EE[\varphi_1^2]}{2(\EE[\varphi_2]+1)} \right)^2, \\
    \mathbb{E} \left[ \frac{\varphi_0 \varphi_1^2}{\varphi_2+1} \right] &\approx \frac{\mathbb{E} [\varphi_0] \cdot \mathbb{E}[\varphi_1^2]}{\mathbb{E}[\varphi_2]+1}.
    \end{aligned}
    \end{equation}
    We expect such approximations to hold when $\EE[\varphi_2]$ is
    large. Motivated by this, we divide the proof of
    Theorem~\ref{theorem:main:MSE} into two cases based on $\EE[\varphi_2]$:
    \begin{description}
    	\item[\caseone.] $\bbE [\varphi_2] \ge n^a$, where $a$ is a
              constant that is determined later. In this case, the
              prevalences concentrate around their mean.
    	\item[\casetwo.] $\bbE [\varphi_2] < n^a$. In this case, both the number
              of unseen elements and the estimates are small.
    \end{description}
\subsection{Analysis for \caseone}
\label{sec:rational_approx1}
We first analyze the case where $\EE[\varphi_2]$ is large. Instead of
asking when approximation \eqref{eq:approx_special1} holds, we generalize and ask if
expectations involving such rational functions of prevalences hold. Let
$\varPhi_\poly$ be a homogeneous polynomial of degree $d$ in
$\varphi_i$ and let $\varPhi_\linear$ be a linear function of prevalences of the
form
\[
\varPhi_\linear = \sum_{i \geq 0} \beta_i \varphi_i,
\]
and let 
\begin{equation}
    \label{eq:sigma}
    \sigma \triangleq \beta_0 + \sum_{i \ge 1} \frac{\beta_i}{\sqrt{2 \pi i}}.
\end{equation}
\begin{theorem} \label{theorem:decouple1}
Let $\beta_i \in [0,1]$ for each $i \geq 0$. Then for any
{non-increasing} function $f$,
		\begin{equation} 
		\mathbb{E} \left[ \varPhi_\poly \cdot f(\varPhi_\linear)
                  \right]\ \ge\ \mathbb{E} [\varPhi_\poly] \cdot \mathbb{E} [f(
                  \varPhi_\linear + d)].\label{eq:lower-bound1}
		\end{equation}
If $f$ is \textbf{concave} and $\mathbb{E} [\varPhi_\linear] \ge d
\sigma$,
		\begin{equation} \label{eq:upper-bound1}
		\mathbb{E} \left[ \varPhi_\poly \cdot  f(\varPhi_\linear)
                  \right]\ \le\ \mathbb{E} [\varPhi_\poly] \cdot f(\mathbb{E} [
                  \varPhi_\linear ] - d \sigma ).
		\end{equation}
\end{theorem}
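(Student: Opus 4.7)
The plan is to exploit the independence of the Poisson counts $N_x$ across symbols by decoupling $\varPhi_\linear$ into a ``local'' piece, depending only on the symbols that appear in the monomial expansion of $\varPhi_\poly$, and an ``external'' piece that is independent of them. By linearity it suffices to prove both inequalities when $\varPhi_\poly$ is a single monomial $\prod_{j=1}^d\varphi_{i_j}$ with non-negative coefficient, which I would expand as
$$\varPhi_\poly = \sum_{\mathbf{x}=(x_1,\dots,x_d)\in\mathcal{X}^d}\prod_{j=1}^d\indic_{N_{x_j}=i_j}.$$
For each tuple $\mathbf{x}$, let $T=T(\mathbf{x})\subseteq\mathcal{X}$ be its set of distinct entries, so $|T|\le d$. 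Writing $h(j)=\beta_j$, decompose $\varPhi_\linear = \varPhi_\linear^T + \varPhi_\linear^{-T}$ with $\varPhi_\linear^T = \sum_{x\in T}h(N_x)$ and $\varPhi_\linear^{-T}=\sum_{x\notin T}h(N_x)$. By the Poisson sampling assumption, $\varPhi_\linear^{-T}$ is independent of $(N_x:x\in T)$, and hence of both the indicator product and $\varPhi_\linear^T$.

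For the lower bound \eqref{eq:lower-bound1}, observe that on the event that the indicator product equals $1$, the local variable $\varPhi_\linear^T$ takes the deterministic value $s(\mathbf{x},\mathbf{i})=\sum_{x\in T}\beta_{i(x)}\in[0,d]$, using $\beta_i\in[0,1]$ and $|T|\le d$. Since $f$ is non-increasing and $\varPhi_\linear^T\ge 0$, pointwise
$$f(\varPhi_\linear)=f(\varPhi_\linear^T+\varPhi_\linear^{-T})\ \ge\ f(d+\varPhi_\linear^{-T})\ \ge\ f(d+\varPhi_\linear).$$
Combining with the independence of the indicator product and $\varPhi_\linear^{-T}$ gives
$$\mathbb{E}\!\left[\prod_j\indic_{N_{x_j}=i_j}\,f(\varPhi_\linear)\right]\ \ge\ \Pr\!\left[\prod_j\indic_{N_{x_j}=i_j}=1\right]\cdot\mathbb{E}[f(\varPhi_\linear+d)],$$
and summing over $\mathbf{x}$ yields \eqref{eq:lower-bound1}.

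For the upper bound \eqref{eq:upper-bound1}, the same decoupling together with Jensen's inequality (using concavity of $f$) produces
$$\mathbb{E}\bigl[f(s(\mathbf{x},\mathbf{i})+\varPhi_\linear^{-T})\bigr]\ \le\ f\bigl(s(\mathbf{x},\mathbf{i})+\mathbb{E}[\varPhi_\linear]-\mathbb{E}[\varPhi_\linear^T]\bigr).$$
The crucial technical estimate is $\mathbb{E}[\varPhi_\linear^T]\le d\sigma$, which I would derive from the Stirling bound on the Poisson pmf, $\max_{\lambda\ge 0}e^{-\lambda}\lambda^i/i!\le 1/\sqrt{2\pi i}$ for $i\ge 1$ (the maximum is attained at $\lambda=i$). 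This yields $\mathbb{E}[h(N_x)]=\beta_0 e^{-np_x}+\sum_{i\ge 1}\beta_i\Pr[N_x=i]\le \sigma$, and summing over $x\in T$ gives the bound. Since $s(\mathbf{x},\mathbf{i})\ge 0$, the argument of $f$ above satisfies $s+\mathbb{E}[\varPhi_\linear]-\mathbb{E}[\varPhi_\linear^T]\ge \mathbb{E}[\varPhi_\linear]-d\sigma$, which is non-negative by hypothesis; combined with $f$ being non-increasing on the relevant range (implicit in the intended applications to functions like $f(y)=1/y^m$), one concludes $f(\cdot)\le f(\mathbb{E}[\varPhi_\linear]-d\sigma)$, and summation over $\mathbf{x}$ completes the proof.

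The main obstacle is the tight control of the local expectation $\mathbb{E}[\varPhi_\linear^T]$ via the Stirling inequality, which is precisely what produces the constant $\sigma$ and justifies the threshold hypothesis $\mathbb{E}[\varPhi_\linear]\ge d\sigma$. The rest is essentially bookkeeping: Poisson independence to decouple, the elementary bound $\varPhi_\linear^T\le d$ and monotonicity for the lower bound, and Jensen plus the Stirling estimate for the upper bound.
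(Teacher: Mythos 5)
Your proposal is correct and follows essentially the same route as the paper: expand $\varPhi_\poly$ into products of indicators, use Poisson independence to decouple the conditioned value of $\varPhi_\linear$ (your $s(\mathbf{x},\mathbf{i})+\varPhi_\linear^{-T}$ is the paper's $T(i^d,x^d)$), bound it above by $\varPhi_\linear+d$ for the lower bound, and use Jensen together with the Stirling bound $P(N_x=i)\le 1/\sqrt{2\pi i}$ to get $\mathbb{E}[T(i^d,x^d)]\ge\mathbb{E}[\varPhi_\linear]-d\sigma$ for the upper bound. The only differences are cosmetic (restricting to a single monomial with nonnegative coefficient and grouping repeated symbols into a distinct set), and your remark that the upper bound also uses monotonicity of $f$ matches the paper's own use of the standing non-increasing hypothesis.
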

\begin{proof}
A proof is given in Section \ref{sec:rational_approx}.
\end{proof}
Note that if the function $f$ is smooth and has small derivative around $\EE [\varPhi_\linear]$, then Theorem~\ref{theorem:decouple1} implies that 
\[
	\mathbb{E} \left[ \varPhi_\poly \cdot  f(\varPhi_\linear)
                  \right] \approx \mathbb{E} \left[ \varPhi_\poly] \cdot \EE[ f(\varPhi_\linear)
                  \right].
\]
In addition to \eqref{eq:upper-bound1} of Theorem~\ref{theorem:decouple1}, 
which only holds when $f$ is concave, we develop one more such upper bound when $f$ is not concave. This is particularly useful for Chao estimator as the function $f$ in Chao estimator is $1/x$, which is not concave.

Define $\mathbb{V}$ as the space spanned by the functions $\{1,  (x+1)^{-1}, ((x+1)(x+2))^{-1}, \dots\}$ over $\mathbb{R}_{\ge 0}$. Functions in this space are represented as 
$\mathbf{v} = (v_0,v_1,\dots) \equiv \sum_{r \ge 0} v_r \cdot \prod_{j=1}^r (x+j)^{-1}$. A function $f_1$ is said to dominate 
another function $f_2$ over some domain $D$ if $\forall x \in D, f_1 (x) 
\ge f_2 (x)$. Let $\mathrm{Supp} (\varPhi_\linear)$ be the range of function $\varPhi_\linear$. 
\begin{theorem} \label{theorem:decouple:0011}
		Consider $\varPhi_\linear$ with $\beta_i \in [0,1]$ for each $i \geq 0$. Consider some function $f$ and let $(f_0',f_1',\dots) \in \mathbb{V}$ dominate $f$ over $\ \mathrm{Supp} (\varPhi_\linear)$. Then, if $\mathbb{E} [\varPhi_\linear] > d \sigma$,
		\begin{equation*}
		\mathbb{E} \left[ \varPhi_\poly f(\varPhi_\linear) \right] \le \mathbb{E} [\varPhi_\poly] \cdot \sum_{t \ge 0} f'_t (\mathbb{E} [ \varPhi_\linear ] - d \sigma )^{-t}.
		\end{equation*}
	\end{theorem}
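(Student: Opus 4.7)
The plan is to dominate $f$ by the basis functions $b_r(x) := \prod_{j=1}^r (x+j)^{-1}$ and then bound each $\mathbb{E}[\varPhi_\poly \cdot b_r(\varPhi_\linear)]$ separately, exploiting Poisson independence of the $N_x$'s. Since $\varPhi_\poly$ is a homogeneous polynomial with nonnegative coefficients in the nonnegative prevalences (so $\varPhi_\poly \geq 0$) and $f(x) \leq \sum_{r \geq 0} f'_r b_r(x)$ pointwise on $\mathrm{Supp}(\varPhi_\linear)$,
\[
\mathbb{E}[\varPhi_\poly \cdot f(\varPhi_\linear)] \;\leq\; \sum_{r \geq 0} f'_r \, \mathbb{E}[\varPhi_\poly \cdot b_r(\varPhi_\linear)].
\]
It therefore suffices to prove, for each $r \geq 0$, that $\mathbb{E}[\varPhi_\poly \cdot b_r(\varPhi_\linear)] \leq \mathbb{E}[\varPhi_\poly] \cdot (\mathbb{E}[\varPhi_\linear] - d\sigma)^{-r}$.

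To decouple $\varPhi_\poly$ from $\varPhi_\linear$, I would expand each degree-$d$ monomial $\prod_{j=1}^d \varphi_{i_j}$ of $\varPhi_\poly$ as $\sum_{\mathbf{x}} \prod_{j=1}^d \indic_{N_{x_j} = i_j}$. Writing $\varPhi_\linear = \sum_{x:\, p_x > 0} \beta_{N_x}$ and fixing $\mathbf{x} = (x_1,\ldots,x_d)$, let $S$ denote the set of distinct entries of $\mathbf{x}$ (so $|S| \leq d$) and set $V_{-S} := \sum_{x \notin S} \beta_{N_x}$. Since $V_S := \sum_{x \in S} \beta_{N_x} \geq 0$ and $b_r$ is decreasing, $b_r(\varPhi_\linear) \leq b_r(V_{-S})$, and by Poissonization the indicators attached to $S$ are independent of $V_{-S}$; hence
\[
\mathbb{E}\!\left[\prod_{j=1}^d \indic_{N_{x_j}=i_j} \cdot b_r(\varPhi_\linear)\right] \;\leq\; \mathbb{E}\!\left[\prod_{j=1}^d \indic_{N_{x_j}=i_j}\right] \cdot \mathbb{E}[b_r(V_{-S})].
\]
Summing over $\mathbf{x}$ reassembles $\mathbb{E}[\prod_j \varphi_{i_j}]$ on the right, so the problem reduces to the single scalar bound $\mathbb{E}[b_r(V_{-S})] \leq (\mathbb{E}[V_{-S}])^{-r}$ together with $\mathbb{E}[V_{-S}] \geq \mathbb{E}[\varPhi_\linear] - d\sigma$.

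The core calculation uses the Beta-function identity $b_r(v) = \frac{1}{(r-1)!}\int_0^1 (1-s)^{r-1} s^v \, ds$, valid for all real $v \geq 0$. Then $\mathbb{E}[b_r(V_{-S})] = \frac{1}{(r-1)!}\int_0^1 (1-s)^{r-1} \mathbb{E}[s^{V_{-S}}]\, ds$ and, by independence, $\mathbb{E}[s^{V_{-S}}] = \prod_{x \notin S} \mathbb{E}[s^{\beta_{N_x}}]$. The convexity of $\beta \mapsto s^\beta$ on $[0,1]$ (for $s \in [0,1]$) yields the chord bound $s^\beta \leq 1 - \beta(1-s) \leq e^{-\beta(1-s)}$, whence $\mathbb{E}[s^{V_{-S}}] \leq \exp(-(1-s)\mathbb{E}[V_{-S}])$. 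Substituting $u = 1-s$ and extending the integration range to $[0,\infty)$ produces the desired $(\mathbb{E}[V_{-S}])^{-r}$. The uniform control $\mathbb{E}[\beta_{N_x}] \leq \sigma$ -- which in turn gives $\mathbb{E}[V_{-S}] \geq \mathbb{E}[\varPhi_\linear] - |S|\sigma \geq \mathbb{E}[\varPhi_\linear] - d\sigma$ -- comes from the Stirling-type bound $\max_\lambda e^{-\lambda}\lambda^i/i! \leq 1/\sqrt{2\pi i}$ applied termwise to the Poisson PMF of $N_x$, matching the definition of $\sigma$ in \eqref{eq:sigma}.

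The main obstacle, I expect, lies in the combinatorial bookkeeping when the indices $x_j$ of a monomial coincide: $S$ must be taken as the set (not multiset) of distinct entries so that the independence of the $S$-indicators from $V_{-S}$ genuinely holds, yet the bound $|S| \leq d$ must remain tight enough across all tuples that the deficit $\mathbb{E}[\varPhi_\linear] - \mathbb{E}[V_{-S}]$ never exceeds $d\sigma$. One must also verify that summing $\mathbb{E}[\prod_j \indic_{N_{x_j}=i_j}]$ over all tuples (including those with repeats) faithfully reconstitutes $\mathbb{E}[\prod_j \varphi_{i_j}]$. Once this is handled uniformly, and provided the $f'_r$ are nonnegative (the natural regime for dominating $f$ from above by elements of $\mathbb{V}$), the per-$r$ inequality passes through the sum $\sum_r f'_r$ to yield the stated theorem.
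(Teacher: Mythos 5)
Your proposal is correct, and it follows the paper's decoupling skeleton -- expand each monomial of $\varPhi_\poly$ into indicator products $\prod_j \indic_{N_{x_j}=i_j}$, use Poisson independence to factor the expectation as in \eqref{eq:start1}, and absorb the removed symbols into a deficit $d\sigma$ via the Stirling bound $P(N_x=i)\le 1/\sqrt{2\pi i}$ exactly as in \eqref{eq:Tlb} -- but it diverges genuinely at the two technical points. First, where the paper uses the exact substitution $T(i^d,x^d)=\varPhi_\linear|_{N_{x_j}=i_j}$ (an identity on the event that the indicators fire), you instead drop the $S$-terms and invoke monotonicity of $b_r$, an inequality that only serves the upper bound but which, as you note, cleanly sidesteps the bookkeeping for repeated indices (and your worry about reconstituting $\mathbb{E}[\prod_j\varphi_{i_j}]$ is vacuous: summing over all tuples, repeats included, gives it exactly). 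Second, and more substantively, the core estimate $\mathbb{E}\bigl[\prod_{j=1}^r (X+j)^{-1}\bigr]\le \mathbb{E}[X]^{-r}$ is obtained in the paper by Lemma~\ref{lem:zX}, a differential inequality $C_X(y)\le \mathbb{E}[X]^{-1}(DC_X)(y)$ for $C_X(z)=\mathbb{E}[z^X]$ applied through $r$ iterated integrations, whereas you use the Beta representation $\prod_{j=1}^r(v+j)^{-1}=\frac{1}{(r-1)!}\int_0^1(1-s)^{r-1}s^v\,ds$, factor $\mathbb{E}[s^X]$ by independence, apply the chord bound $s^\beta\le 1-\beta(1-s)\le e^{-\beta(1-s)}$ for $\beta\in[0,1]$, and finish with a Gamma integral; this is a self-contained and arguably more elementary route, and the intermediate bound $\mathbb{E}[s^X]\le e^{-(1-s)\mathbb{E}[X]}$ is of independent interest, while the paper's Lemma~\ref{lem:zX} is stronger (it controls every partial integral $\int_0^u\mathbb{E}[z^X]\,dz$) and is packaged as a reusable lemma. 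One caveat you correctly flag -- nonnegativity of the $f'_t$ for $t\ge 1$ (and of the coefficients of $\varPhi_\poly$) is needed to push the per-$t$ bound through the sum -- is equally implicit in the paper's own proof and holds in all its applications, so it is not a gap relative to the paper, though it deserves to be stated as a hypothesis.
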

\begin{proof}
A proof is given in Section \ref{sec:rational_approx}.
\end{proof}
The above two theorems can be used in other scenarios where expectation of rational functions of prevalences are required, such as computing the expected KL risk for Good-Turing estimators~\cite{OS15} and modified Good-Turing estimators~\cite{acharya2013optimal, hao2019doubly}. 
Using Theorems \ref{theorem:decouple1} and \ref{theorem:decouple:0011}, we approximate $\cE(\hat{U}^{\text{mc}}, P)$ and relate the expectation of ratios (resp. products) to ratio (resp. product) of expectations as required in \eqref{eq:approx_special1}. This results in the following lemma.

\begin{lemma} 
\label{lemma:high:0011}
	For the modified Chao estimator, defining $\sigmaAC = 1/\sqrt{4\pi} = 0.282...$ by \eqref{eq:sigma},
        if $\mathbb{E} [\varphi_2] > 4\sigmaAC$, for any distribution
        $P \in \Delta_k$,
		\begin{equation*}
\cE(\hat{U}^{\text{mc}}, P) \le \left( \frac{\mathbb{E}[\varphi_1^2]}{2
                  \mathbb{E} [\varphi_2]} - \mathbb{E} [\varphi_0]\right)^2
                + \frac{ 4 k^4}{\left( \mathbb{E}
                  [\varphi_2] - 4 \sigmaAC \right)^3}.
\end{equation*}
\end{lemma}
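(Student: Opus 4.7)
The plan is to expand $\cE(\hat U^{\text{mc}}, P)$ via (\ref{eq:MSE3term}) into three expectations and treat each with the decoupling machinery of Theorems~\ref{theorem:decouple1} and~\ref{theorem:decouple:0011}, in every case taking $\varPhi_\linear = \varphi_2$ (so $\beta_2 = 1$, all other $\beta_i = 0$, and $\sigma = 1/\sqrt{4\pi} = \sigmaAC$ by~(\ref{eq:sigma})). The hypothesis $\mathbb{E}[\varphi_2] > 4\sigmaAC$ is precisely what is needed to make the denominators appearing in the corresponding upper bounds positive.

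For the first term $\mathbb{E}[\varphi_1^4/(4(\varphi_2+1)^2)]$, the function $f(x) = (x+1)^{-2}$ is convex rather than concave, so the concave upper bound~(\ref{eq:upper-bound1}) does not apply; I invoke Theorem~\ref{theorem:decouple:0011} with $\varPhi_\poly = \varphi_1^4$ (degree $d = 4$), using the dominating element $\mathbf{v} = (0, 0, 2, 0, \ldots) \equiv 2/((x+1)(x+2))$ of $\mathbb{V}$, which is valid since $(x+1)^{-2} \le 2/((x+1)(x+2))$ for all $x \ge 0$ by elementary algebra. For the second (minus-sign) term $\mathbb{E}[\varphi_0\varphi_1^2/(\varphi_2+1)]$ a \emph{lower} bound suffices, so I apply~(\ref{eq:lower-bound1}) with $\varPhi_\poly = \varphi_0\varphi_1^2$ ($d = 3$) and the non-increasing $f(x) = (x+1)^{-1}$, followed by Jensen's inequality on the convex map $x \mapsto 1/(x+4)$ to pull the expectation inside. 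The third term is direct: $\mathbb{E}[\varphi_0^2] = (\mathbb{E}[\varphi_0])^2 + \mathrm{Var}(\varphi_0) \le (\mathbb{E}[\varphi_0])^2 + k$, because $\varphi_0$ is a sum of independent Bernoullis whose variances sum to at most $\mathbb{E}[\varphi_0] \le k$.

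With these three substitutions $\cE(\hat U^{\text{mc}}, P)$ is bounded by an expression in $\mathbb{E}[\varphi_1^4]$, $\mathbb{E}[\varphi_0\varphi_1^2]$, and $(\mathbb{E}[\varphi_0])^2$, which is not yet in the form of the lemma. I then use independence of the $N_x$ under Poisson sampling to split the diagonal from the product of marginals:
\[
\mathbb{E}[\varphi_1^4] = (\mathbb{E}[\varphi_1^2])^2 + \delta_1, \qquad \mathbb{E}[\varphi_0\varphi_1^2] = \mathbb{E}[\varphi_0]\,\mathbb{E}[\varphi_1^2] - \delta_2,
\]
where the diagonal corrections $\delta_1,\delta_2$ are polynomial combinations of the per-symbol Poisson probabilities $e^{-np_x}(np_x)^i/i!$; since each such probability lies in $[0,1]$ and there are at most $k$ symbols, direct counting yields $\delta_1 = O(k^3)$ and $\delta_2 = O(k^2)$. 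Algebraic rearrangement then extracts exactly the bias-squared term $(\mathbb{E}[\varphi_1^2]/(2\mathbb{E}[\varphi_2]) - \mathbb{E}[\varphi_0])^2$, and all remaining pieces must be absorbed into $4k^4/(\mathbb{E}[\varphi_2] - 4\sigmaAC)^3$.

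The chief obstacle is the bookkeeping of this last absorption. One must show that the combined slack from (i) the factor-of-$2$ domination of $(x+1)^{-2}$ by $2/((x+1)(x+2))$, (ii) the $4\sigmaAC$ (resp.\ $+4$) gap between $\mathbb{E}[\varphi_2]$ and the shifted denominators in the two bounds, and (iii) the diagonal corrections $\delta_1,\delta_2$, fits inside the single clean expression $4k^4/(\mathbb{E}[\varphi_2] - 4\sigmaAC)^3$. I would carry this out using elementary estimates such as $(E-c)^{-a} - E^{-a} \le ac/(E-c)^{a+1}$ for $E > c$ and $a \in \{1,2\}$ to trade the denominator shift for one extra inverse power, together with the trivial moment bounds $\mathbb{E}[\varphi_1^2] \le k^2$, $\mathbb{E}[\varphi_0\varphi_1^2] \le k^3$, and $\mathbb{E}[\varphi_1^4] \le k^4$ to close the arithmetic.
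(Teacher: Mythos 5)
There is a genuine gap, and it sits exactly at the step you wave off as bookkeeping. Your choice of dominating element $(0,0,2,0,\dots)\equiv 2/((x+1)(x+2))$ is valid as an inequality, but it is too lossy: it inflates the \emph{leading} term by a factor of $2$. After applying Theorem~\ref{theorem:decouple:0011} and trading $(\mathbb{E}[\varphi_2]-4\sigmaAC)^{-2}$ for $\mathbb{E}[\varphi_2]^{-2}$, your bound on the first term has main part $\frac{\mathbb{E}[\varphi_1^2]^2}{2\,\mathbb{E}[\varphi_2]^2}$ rather than the $\frac{\mathbb{E}[\varphi_1^2]^2}{4\,\mathbb{E}[\varphi_2]^2}$ that is needed to recombine with the cross term and $\mathbb{E}[\varphi_0]^2$ into the perfect square $\left(\frac{\mathbb{E}[\varphi_1^2]}{2\mathbb{E}[\varphi_2]}-\mathbb{E}[\varphi_0]\right)^2$. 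The surplus $\frac{\mathbb{E}[\varphi_1^2]^2}{4\,\mathbb{E}[\varphi_2]^2}$ is of order $k^4/\mathbb{E}[\varphi_2]^2$ and cannot be absorbed into $4k^4/(\mathbb{E}[\varphi_2]-4\sigmaAC)^3$: for instance, for a near-uniform distribution with $n\asymp k$ one has $\mathbb{E}[\varphi_1^2]\asymp k^2$ and $\mathbb{E}[\varphi_2]\asymp k$, so the surplus is $\Theta(k^2)$ while the allowed slack is only $\Theta(k)$. The paper avoids this by dominating $(1+x)^{-2}$ with $\frac{1}{(1+x)(2+x)}+\frac{3}{(1+x)(2+x)(3+x)}$, whose leading coefficient is $1$, so the loss is pushed entirely into an $O\!\left(k^4/(\mathbb{E}[\varphi_2]-4\sigmaAC)^3\right)$ correction. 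Your argument would go through once you replace your dominating vector by one matching $1/x^2$ to first order.

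Two smaller remarks. For the diagonal corrections you assert $\mathbb{E}[\varphi_1^4]=\mathbb{E}[\varphi_1^2]^2+O(k^3)$ and $\mathbb{E}[\varphi_0\varphi_1^2]=\mathbb{E}[\varphi_0]\mathbb{E}[\varphi_1^2]-O(k^2)$ by ``direct counting''; the orders are right (the paper proves the first via Lemma~\ref{lemma:appB:004}, and obtains the second-type bound via \eqref{eq:upper-bound1} with $\varPhi_\linear=\varphi_0$ and $f(x)=-x$), but in your write-up these are claims, not proofs, and the sign/direction of the correction in the cross term must be established, since only a lower bound on $\mathbb{E}[\varphi_0\varphi_1^2]$ is usable. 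Your treatment of the second and third terms of \eqref{eq:MSE3term} (lower bound via \eqref{eq:lower-bound1} with $d=3$ plus Jensen, and $\mathbb{E}[\varphi_0^2]\le \mathbb{E}[\varphi_0]^2+\mathbb{E}[\varphi_0]$) coincides with the paper and is fine.
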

\begin{proof}
We start by upper-bounding the first term of $\cE(\hat{U}^{\text{mc}}, P)$ in \eqref{eq:MSE3term} using Theorem \ref{theorem:decouple:0011}. Let $f(x) = (1+x)^{-2}$. For $x\ge0$, since
\[
\frac{1}{(1+x)^2}\le \frac{1}{(1+x)(2+x)}+\frac{3}{(1+x)(2+x)(3+x)},
\]
we have that $(0,0,1,3,0,\ldots)\in\mathbb{V}$ dominates $f$ in $[0,\infty)$. Setting $\varPhi_\poly=\varphi_1^4$ ($d=4$) and $\varPhi_\linear=\varphi_2$ in Theorem \ref{theorem:decouple:0011}, we get 
\begin{align}
    \mathbb{E} \left[ \left(\frac{\varphi_1^2}{2(\varphi_2+1)} \right)^2
              \right]&\le \frac{\EE[\varphi_1^4]}{4}\left(\frac{1}{(\EE[\varphi_2]-4\sigmaAC)^2}+\frac{3}{(\EE[\varphi_2]-4\sigmaAC)^3}\right).\label{eq:MSE11}
\end{align}
For $0<a<x$ and integer $t\ge 1$, we have $(1-a/x)^t \ge 1-at/x \ge 1-at/(x-a)$. Rearranging, we get
\begin{equation}
    \frac{1}{(x-a)^t}\le \frac{1}{x^t}+\frac{at}{(x-a)^{t+1}},\,\,0<a<x.
\end{equation}
Using the above in the first term of \eqref{eq:MSE11} with $x=\EE[\varphi_2]$, $a=4\sigmaAC$, we get
\begin{align}
    \mathbb{E} \left[ \left(\frac{\varphi_1^2}{2(\varphi_2+1)} \right)^2
              \right]&\le \frac{\EE[\varphi_1^4]}{4\EE[\varphi_2]^2}+\frac{(3+8\sigmaAC)k^4}{4(\EE[\varphi_2]-4\sigmaAC)^3},\label{eq:MSE12}
\end{align}
where we have used $\varphi_1\le k$ in the second term. Using Lemma \ref{lemma:appB:004} with $\varPhi_\poly = \varphi_1^2$, we have
$$\EE[\varphi_1^4]\le \EE[\varphi_1^2]^2+6k\EE[\varphi_1^2]\le \EE[\varphi_1^2]^2+6k^3.$$
Using the above in \eqref{eq:MSE12}, we get
\begin{align}
\mathbb{E} \left[ \left(\frac{\varphi_1^2}{2(\varphi_2+1)} \right)^2
              \right] &\le \frac{\EE[\varphi_1^2]^2}{4\EE[\varphi_2]^2}
              +\frac{6k^3}{4\EE[\varphi_2]^2}
              +\frac{(3+8\sigmaAC)k^4}{4(\EE[\varphi_2]-4\sigmaAC)^3}\nonumber\\
              &\stackrel{(i)}{\le}\frac{\EE[\varphi_1^2]^2}{4\EE[\varphi_2]^2}
              +\frac{6k^3}{4(\EE[\varphi_2]-4\sigmaAC)^2}\,\frac{k}{\EE[\varphi_2]-4\sigmaAC}
              +\frac{(3+8\sigmaAC)k^4}{4(\EE[\varphi_2]-4\sigmaAC)^3}\nonumber\\
              &\le \frac{\EE[\varphi_1^2]^2}{4\EE[\varphi_2]^2}+\frac{(9+8\sigmaAC)k^4}{4(\EE[\varphi_2]-4\sigmaAC)^3},
              \label{eq:MSEterm1}
\end{align}
where $(i)$ follows because $\EE[\varphi_2]-4\sigmaAC\le \EE[\varphi_2] \le k$.

Next, we lower bound the second term of $\cE(\hat{U}^{\text{mc}},P)$ in \eqref{eq:MSE3term}. In \eqref{eq:lower-bound1}, setting $\varPhi_\poly=\varphi_0\varphi_1^2$ ($d=3$), $\varPhi_\linear=\varphi_2$ and $f(x)=(1+x)^{-1}$, we get 
\begin{align}
    \mathbb{E} \left[ \frac{\varphi_0
                \varphi_1^2}{\varphi_2+1} \right] &\ge \EE[\varphi_0
                \varphi_1^2]\,\EE\left[\frac{1}{\varphi_2+4}\right]\nonumber\\
&\stackrel{(i)}{\ge} \EE[\varphi_0\varphi_1^2]\,\frac{1}{\EE[\varphi_2]+4}\nonumber\\
&\stackrel{(ii)}{\ge} \EE[\varphi_0\varphi_1^2]\left(\frac{1}{\EE[\varphi_2]}-\frac{4}{\EE[\varphi_2]^2}\right),\label{eq:MSEterm2}
\end{align}
where $(i)$ follows by Jensen's inequality and $(ii)$ by the inequality $1/(x+a)\ge 1/x-a/x^2$ for $x+a\ge0$.
 
To upper bound the third term of $\cE(\hat{U}^{\text{mc}},P)$ in \eqref{eq:MSE3term}, we use Lemma \ref{lemma:appB:002} (with $h=2$) to get 
\begin{equation}
 \EE[\varphi_0^2]\le \EE[\varphi_0]^2+\EE[\varphi_0].\label{eq:MSEterm3}
\end{equation}
Adding \eqref{eq:MSEterm1}, \eqref{eq:MSEterm2} and \eqref{eq:MSEterm3}, we get
\begin{align}
\cE(\hat{U}^{\text{mc}},P)
&\le \frac{\EE[\varphi_1^2]^2}{4\EE[\varphi_2]^2}+\frac{(9+8\sigmaAC)k^4}{4(\EE[\varphi_2]-4\sigmaAC)^3}
-\frac{\EE[\varphi_1^2\varphi_0]}{\EE[\varphi_2]}+\frac{4\EE[\varphi_1^2\varphi_0]}{\EE[\varphi_2]^2}
+\EE[\varphi_0]^2+\EE[\varphi_0].\label{eq:MSE:ub1}
\end{align}
In \eqref{eq:upper-bound1} of Theorem \ref{theorem:decouple1}, setting $\varPhi_\poly=\varphi_1^2$ ($d=2$), $\varPhi_\linear=\varphi_0$ and $f(x)=-x$, we get 
\begin{equation}
    \EE[\varphi_1^2\varphi_0]\ge \EE[\varphi_1^2]\EE[\varphi_0-2\sigmaAC]=\EE[\varphi_1^2]\EE[\varphi_0]-2\sigmaAC\EE[\varphi_1^2].
\end{equation}
Using the above in the negative third term in \eqref{eq:MSE:ub1} and rearranging, we get
\begin{equation}
\cE(\hat{U}^{\text{mc}},P)\le 
\left(\frac{\EE[\varphi_1^2]}{2\EE[\varphi_2]}-\EE[\varphi_0]\right)^2
+\frac{(9+8\sigmaAC)k^4}{4(\EE[\varphi_2]-4\sigmaAC)^3}
+\frac{2\sigmaAC\EE[\varphi_1^2]}{\EE[\varphi_2]}+\frac{4\EE[\varphi_1^2\varphi_0]}{\EE[\varphi_2]^2}+\EE[\varphi_0].\label{eq:MSE1}
\end{equation}
To get the statement of the lemma, the last three terms above are bounded and combined into the second term as follows. Using $\varphi_i\le k$ in the numerators of the last three terms in \eqref{eq:MSE1} and replacing $\EE[\varphi_2]$ with the smaller $\EE[\varphi_2]-4\sigmaAC$ in the denominators, we get
\begin{equation}
\cE(\hat{U}^{\text{mc}},P)\le 
\left(\frac{\EE[\varphi_1^2]}{2\EE[\varphi_2]}-\EE[\varphi_0]\right)^2
+\frac{(9+8\sigmaAC)k^4}{4(\EE[\varphi_2]-4\sigmaAC)^3}
+\frac{2\sigmaAC k^2}{\EE[\varphi_2]-4\sigmaAC}+\frac{4k^3}{(\EE[\varphi_2]-4\sigmaAC)^2}+k.\label{eq:MSE2}
\end{equation}
Finally, observe that
\begin{align}
    \EE[\varphi_2]-4\sigmaAC \le \EE[\varphi_2] =  \frac{1}{2}\sum_{x\in\mathcal{X}}(np_x)^2e^{-np_x}\stackrel{(i)}{\le} 2e^{-2}\mathcal{X} \le 2e^{-2}k,
\end{align}
where $(i)$ follows because $x^2e^{-x}\le 4e^{-2}$, $x\ge0$. Using the above in \eqref{eq:MSE2},
\begin{align}
    \cE(\hat{U}^{\text{mc}},P)&\le
    \left(\frac{\EE[\varphi_1^2]}{2\EE[\varphi_2]}-\EE[\varphi_0]\right)^2
+\frac{(9+8\sigmaAC)k^4}{4(\EE[\varphi_2]-4\sigmaAC)^3}
+\frac{2\sigmaAC (4e^{-4})k^4}{(\EE[\varphi_2]-4\sigmaAC)^3}\nonumber\\
&\qquad\qquad\qquad\qquad\qquad\qquad+\frac{4(2e^{-2})k^4}{(\EE[\varphi_2]-4\sigmaAC)^3}+\frac{(8e^{-8})k^4}{(\EE[\varphi_2]-4\sigmaAC)^3}\nonumber\\
&\le \left(\frac{\EE[\varphi_1^2]}{2\EE[\varphi_2]}-\EE[\varphi_0]\right)^2+\frac{(9/4+2\sigmaAC+8e^{-4}\sigmaAC+8e^{-2}+8e^{-8})k^4}{(\EE[\varphi_2]-4\sigmaAC)^3},
\end{align}
which results in the statement of the lemma.
\end{proof}
Thus, to bound the MSE of the Chao estimator, we need to bound $
\frac{\mathbb{E}[\varphi_1^2]}{2 \mathbb{E} [\varphi_2]} - \mathbb{E} [\varphi_0]$.
	\begin{lemma} \label{lemma:high:0021}
	For any $P \in \Delta_k$,
	\begin{equation}
	\frac{-k e^{-n/k}}{(1 + n/(k\alpha))^2} \le
        \frac{\mathbb{E}[\varphi_1^2]}{2 \mathbb{E} [\varphi_2]} -
        \mathbb{E} [\varphi_0] \le \frac{k}{n}, \label{eq:biasbound}
	\end{equation}
	where $\alpha=0.5569...$ solves $u^2=4e^{-2}e^{-u}$. Modifying \eqref{eq:biasbound},
	\begin{equation}
	    \left(\frac{\mathbb{E}[\varphi_1^2]}{2 \mathbb{E} [\varphi_2]} -
        \mathbb{E} [\varphi_0]\right)^2 \le \frac{k^2 e^{-2n/k}}{(1 + n/(k\alpha))^4}+\frac{k^2}{n^2}.\label{eq:biassquarebound}
	\end{equation}
	\end{lemma}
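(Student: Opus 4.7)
My plan is to express everything as sums over the support of $P$ using $y_x := np_x$, split $\EE[\varphi_1^2]$ into its mean-squared and variance contributions, and then apply Cauchy--Schwarz in both directions. Concretely, write $A = \EE[\varphi_0] = \sum_x e^{-y_x}$, $B = \EE[\varphi_1] = \sum_x y_x e^{-y_x}$, and $C = 2\EE[\varphi_2] = \sum_x y_x^2 e^{-y_x}$. Under Poisson sampling the indicators $\indic_{N_x = 1}$ are independent Bernoulli$(y_x e^{-y_x})$, so $\EE[\varphi_1^2] = B^2 + B - D$ where $D := \sum_x y_x^2 e^{-2y_x}$ and $B - D = \text{Var}(\varphi_1) \ge 0$. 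The target quantity becomes
\[
\frac{\EE[\varphi_1^2]}{2\EE[\varphi_2]} - \EE[\varphi_0] = \frac{(B^2 - AC) + (B - D)}{C},
\]
splitting into a non-positive Cauchy--Schwarz deficit $B^2 - AC \le 0$ (the same inequality used in \eqref{eq:cauchy}) and a non-negative variance term $(B - D)/C$.

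For the upper bound in \eqref{eq:biasbound}, I would drop $B^2 - AC \le 0$ and use $B - D \le B$; then the constraint $y_x \ge n/k$ gives $C \ge (n/k) B$, so $(B - D)/C \le B/C \le k/n$. For the lower bound, I drop the non-negative variance term to obtain
\[
\frac{\EE[\varphi_1^2]}{2\EE[\varphi_2]} - \EE[\varphi_0] \ge -\frac{AC - B^2}{C},
\]
reducing the task to proving $(AC - B^2)/C \le k e^{-n/k}/(1 + n/(k\alpha))^2$. The Cauchy--Schwarz gap has the explicit symmetric form $AC - B^2 = \tfrac{1}{2}\sum_{x,y}(y_x - y_y)^2 e^{-(y_x + y_y)}$, so the task reduces to bounding a weighted $y$-spread against $C$, subject to $y_x \ge n/k$, $\sum_x y_x = n$, and support size at most $k$.

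The heart of the argument lies in this constrained optimisation. I would argue, via a perturbation/convexity argument on the gap functional, that the extremal configuration puts $s_1$ symbols at the floor value $y = n/k$ and $s_2$ symbols at a common larger value $y^\ast$ (with $s_1 (n/k) + s_2 y^\ast = n$ and $s_1 + s_2 \le k$). In this two-point configuration the ratio $(AC - B^2)/C$ collapses to a scalar function of $(y^\ast, s_2)$ whose stationarity condition rearranges exactly to $u^2 = 4 e^{-2} e^{-u}$ at the optimum --- thereby identifying the constant $\alpha$ in the statement --- and substituting back yields $k e^{-n/k}/(1 + n/(k\alpha))^2$.

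The main obstacle I expect is precisely this reduction to a two-point extremal and the subsequent scalar calculus isolating $\alpha$: ruling out multi-point maximisers and carrying out the optimisation cleanly is the technical heart. Once \eqref{eq:biasbound} is in hand, \eqref{eq:biassquarebound} follows immediately from the elementary inequality $x^2 \le L^2 + U^2$ whenever $-L \le x \le U$ with $L, U \ge 0$, applied here with $L = k e^{-n/k}/(1 + n/(k\alpha))^2$ and $U = k/n$.
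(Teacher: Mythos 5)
Your setup is sound and, for the easy parts, matches the paper: the identity $\EE[\varphi_1^2]=B^2+\mathrm{Var}(\varphi_1)$ together with $C\ge (n/k)B$ gives the upper bound $k/n$ exactly as in the paper (which uses $\EE[\varphi_1^2]\le\EE[\varphi_1]^2+\EE[\varphi_1]$ and $\EE[\varphi_1]\le\frac{2k}{n}\EE[\varphi_2]$), dropping the variance term for the lower bound is the paper's Jensen step, and the final squaring via $x^2\le L^2+U^2$ for $-L\le x\le U$ is fine. The genuine gap is the lower bound's core: you reduce to showing $(AC-B^2)/C\le k e^{-n/k}/(1+n/(k\alpha))^2$ and then only announce that a ``perturbation/convexity argument'' would show the maximiser is a two-point configuration (floor value plus one common larger value), with the scalar stationarity condition yielding $u^2=4e^{-2}e^{-u}$. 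That is precisely the hard content of the lemma, and it is not supplied. Moreover, your guessed extremal structure is not what the first-order analysis actually gives: in the paper, differentiating $B(\lambda)=\EE[\varphi_0]-\EE[\varphi_1]^2/(2\EE[\varphi_2])$ shows that a stationary coordinate must equal either $\lambda_c=2\EE[\varphi_2]/\EE[\varphi_1]$ or $2+\lambda_c$, so candidate extrema have up to \emph{three} distinct values ($n/k$, $\lambda_c$, $2+\lambda_c$), and a separate monotonicity argument ($B(s_0,s_1)\ge B(s_0,s_1+1)$, resting on the decrease of $(2x+2-n/k)/(x(x+2))$ for $x>n/k$) is needed to kill the middle level before the scalar optimisation in $u$ can be carried out.

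A further technical point: the paper deliberately \emph{relaxes} the constraint set to $\Lambda'=\{v: v_i\ge n/k\}$, discarding $\sum_i\lambda_i=n$, which is what makes the stationarity conditions the clean ones above. You propose to keep $\sum_x y_x=n$ (and the support-size bound), which would insert a Lagrange multiplier into the first-order conditions and change the candidate values; your two-point ansatz with tied mass at the floor would then not follow from the computation you sketch. The relaxation is harmless since one only needs an upper bound on the gap, but it has to be stated, and without it (or an equivalent device) the extremal analysis you defer is harder than you anticipate. The symmetric form $AC-B^2=\tfrac12\sum_{x,y}(y_x-y_y)^2e^{-(y_x+y_y)}$ is correct but is not used by the paper and does not by itself circumvent the optimisation. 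So: correct skeleton, but the lemma's main claim --- the constant $\alpha$ and the factor $(1+n/(k\alpha))^{-2}$ --- rests on an argument you have not given.
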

\begin{proof}
We first prove the upper bound. By Lemma \ref{lemma:appB:002} (with $h=2$), 
\begin{align}
    \EE[\varphi_1^2]\le \EE[\varphi_1]^2+\EE[\varphi_1].\label{eq:phi1}
\end{align}
Using \eqref{eq:cauchy}, the first term above is upper-bounded as $\EE[\varphi_1]^2\le 2\EE[\varphi_0]\EE[\varphi_2]$. For the second term, we proceed as follows:
\begin{align}
    \EE[\varphi_1] &= \sum_{x\in\mathcal{X}}e^{-np_x}np_x = 2\sum_{x\in\mathcal{X}}e^{-np_x}\frac{(np_x)^2}{2}\frac{1}{np_x}\nonumber\\
    &\stackrel{(i)}{\le} 2\sum_{x\in\mathcal{X}}e^{-np_x}\frac{(np_x)^2}{2}\frac{k}{n}\nonumber\\
    &\le \frac{k}{n}\,2\EE[\varphi_2],\label{eq:ephi1}
\end{align}
where $(i)$ follows by using $p_x\ge1/k$ in the term $1/np_x$. Using \eqref{eq:cauchy} and \eqref{eq:ephi1} in \eqref{eq:phi1}, we get the upper bound of the lemma.

The lower bound is more involved and we provide the proof now. 
 By Jensen's inequality,
\[
	\frac{\bbE [\varphi^2_1]}{2 \bbE \left[ \varphi_2 \right]} - \bbE \left[ \varphi_0 \right] \geq 	\frac{\bbE [\varphi_1]^2}{2 \bbE \left[ \varphi_2 \right]} - \bbE \left[ \varphi_0 \right].
\]
Hence, it suffices to lower bound the RHS above, or upper bound its negative. For ease of exposition, let $\lambda_x$ denote $np_x$ for symbol $x\in\mathcal{X}$. Recall that
\[
\EE[\varphi_i] = \sum_{x \in \cX} e^{-\lambda_x} \frac{\lambda^i_x}{i!}.
\]
Fixing the size of the alphabet $m\triangleq|\mathcal{X}|$ and letting $\lambda=[\lambda_1,\ldots,\lambda_m]$, we define
\[
B(\lambda) \triangleq\bbE \left[ \varphi_0 \right]-\frac{\bbE [\varphi_1]^2}{2 \bbE \left[ \varphi_2 \right]} =\sum_{i=1}^m e^{-\lambda_i} - \frac{(\sum_{i=1}^m \lambda_i e^{-\lambda_i})^2}{\sum_{i=1}^m \lambda_i^2 e^{-\lambda_i}},
\]
where
\begin{equation}
    \lambda\in\Lambda\triangleq\{v\in\mathbb{R}^m:v_i\ge n/k,\sum_i v_i = n\}.
\end{equation}
We relax the domain of $\lambda$ to 
\begin{equation}
    \Lambda'\triangleq\{v\in\mathbb{R}^m:v_i\ge n/k\}\supseteq \Lambda,
\end{equation}
and consider the following optimization problem:
\begin{equation}
    B^* = \max_{\lambda\in\Lambda'} B(\lambda).
\end{equation}
In the rest of this proof, we will show that $B^*\le k\,e^{-n/k}/(1+n/\alpha k)^2$, which implies the lower bound of the lemma.

Since $B(\lambda)$ is continuously differentiable in $\Lambda'$, any extremum point $\lambda^*=[\lambda^*_1,\ldots,\lambda^*_m]\in\Lambda'$ for $B(\lambda)$ satisfies either $\lambda^*_i=n/k$ or $\frac{\partial B}{\partial \lambda_i}\vert_{\lambda^*_i}=0$ for each $i$. Differentiating $B(\lambda)$ partially with respect to $\lambda_i$ and simplifying, we get
\begin{align}
    \frac{\partial B}{\partial \lambda_i} &= -\left(\lambda_i - \frac{2\EE[\varphi_2]}{\EE[\varphi_1]}\right)\left(\lambda_i - 2 - \frac{2\EE[\varphi_2]}{\EE[\varphi_1]}\right)\frac{e^{-\lambda_i}\EE[\varphi_1]^2}{4\EE[\varphi_2]^2}\label{eq:Bp1}\\
    &= -\big(a_{\tsim i}\lambda_i-b_{\tsim i}\big)\big((a_{\tsim i}-2e^{-\lambda_i})\lambda_i-(2a_{\tsim i}+b_{\tsim i})\big)\frac{e^{-\lambda_i}}{4\EE[\varphi_2]^2},\label{eq:Bp2} 
\end{align}
where $a_{\tsim i}=\sum\limits_{i'=1,i'\ne i}^m\lambda_{i'}e^{-\lambda_{i'}}$ and $b_{\tsim i}=\sum\limits_{i'=1,i'\ne i}^m\lambda^2_{i'}e^{-\lambda_{i'}}$. Hence, if 
$\frac{\partial B}{\partial \lambda_i}=0$, then either
\begin{align}
    \lambda_i &= \frac{2\EE[\varphi_2]}{\EE[\varphi_1]} = \frac{b_{\tsim i}}{a_{\tsim i}},\text{ or}\\
    \lambda_i& = 2+\frac{2\EE[\varphi_2]}{\EE[\varphi_1]}\text{ solves }(a_{\tsim i}-2e^{-\lambda_i})\lambda_i=2a_{\tsim i}+b_{\tsim i}.\label{eq:li}
\end{align}
Since $(a_{\tsim i}-2e^{-x})x$ is one-to-one from $[\max(0,\log(2/a_{\tsim i})),\infty)$ to $[0,\infty)$, a solution for $\lambda_i$ exists in \eqref{eq:li}. Also, by \eqref{eq:ephi1}, $\frac{2\EE[\varphi_2]}{\EE[\varphi_1]}\ge n/k$. 

Hence, any extremum point $\lambda^*=[\lambda^*_1,\ldots,\lambda^*_m]\in\Lambda'$ of $B(\lambda)$ necessarily has the following form:
\begin{align}
    \exists S_0\subseteq [m]: &\,\,\lambda^*_i=n/k, \qquad\quad i\in S_0,\\
    \exists S_1\subseteq [m]\setminus S_0: &\,\,\lambda^*_i=\lambda_c, \qquad\qquad i\in S_1,\\
    S_2 = [m]\setminus S_0\setminus S_1: &\,\,\lambda^*_i=2+\lambda_c, \qquad i\in S_2,\\
    &\,\,\lambda_c = \frac{\sum_{i=1}^m(\lambda^*_i)^2 e^{-\lambda^*_i}}{\sum_{i=1}^m\lambda^*_i e^{-\lambda^*_i}}.\label{eq:lc}
\end{align}
Letting $s_0 = |S_0|$, $s_1 = |S_1|$, \eqref{eq:lc} can be written as
\begin{align}
    \lambda_c = \frac{s_0(n/k)^2 e^{-n/k} + s_1\lambda^2_c e^{-\lambda_c} + (m-s_0-s_1)(2+\lambda_c)^2 e^{-(2+\lambda_c)}}{s_0(n/k)e^{-n/k}+s_1\lambda_c e^{-\lambda_c}+(m-s_0-s_1)(2+\lambda_c)e^{-(2+\lambda_c)}}.
\end{align}
Cross-multiplying and simplifying, we get
\begin{align}
    s_0(\lambda_c-n/k)(n/k)e^{-n/k}=2(m-s_0-s_1)(2+\lambda_c)e^{-(2+\lambda_c)}.\label{eq:lc2}
\end{align}
For $s_0=m$, we get $B(\lambda^*)=0$. For $s_0=0$, from \eqref{eq:lc2}, we need $s_1=m$, which results in $B(\lambda^*)=0$. For $s_0+s_1=m$, we have $s_2=0$ and, from \eqref{eq:lc2}, we need $\lambda_c=n/k$, which results in $B(\lambda^*)=0$. For $s_0>0, s_1\ge0$ such that $s_0+s_1<m$, it is easy to see that there exists a unique $\lambda_c>n/k$ satisfying \eqref{eq:lc2} (the LHS increases from 0 at $\lambda_c=n/k$ linearly, and the RHS falls from a non-zero value). For such extremum points, we get $B(\lambda^*)>0$. Therefore,
\begin{align}
    B^* &= \max_{\lambda\in\Lambda'} B(\lambda)\nonumber\\ 
    &= \max_{\stackrel{s_0\in[1:m-1],s_0+s_1<m}{\lambda^*:\lambda_c\text{ solves }\eqref{eq:lc2}}} \left[s_0e^{-n/k}+s_1e^{-\lambda_c}+(m-s_0-s_1)e^{-(2+\lambda_c)} - \frac{\left(\sum_{i=1}^m\lambda^*_i e^{-\lambda^*_i}\right)^2}{\sum_{i=1}^m(\lambda^*_i)^2 e^{-\lambda^*_i}}\right]\nonumber\\
    &\stackrel{(a)}{=}\max_{\stackrel{s_0\in[1:m-1],s_0+s_1<m}{\lambda_c\text{ solves }\eqref{eq:lc2}}}\left[s_0e^{-n/k}+s_1e^{-\lambda_c}+(m-s_0-s_1)e^{-(2+\lambda_c)}\right.\nonumber\\ &\left.\qquad\qquad\qquad\qquad - \frac{s_0(n/k)e^{-n/k}+s_1\lambda_c e^{-\lambda_c} + (m-s_0-s_1)(2+\lambda_c)e^{-(2+\lambda_c)}}{\lambda_c}\right]\nonumber\\
    &=\max_{\stackrel{s_0\in[1:m-1],s_0+s_1<m}{\lambda_c\text{ solves }\eqref{eq:lc2}}} \frac{1}{\lambda_c}\left[s_0(\lambda_c-n/k)e^{-n/k}-2(m-s_0-s_1)e^{-(2+\lambda_c)}\right]\nonumber\\
    &\stackrel{(b)}{=}\max_{\stackrel{s_0\in[1:m-1],s_0+s_1<m}{\lambda_c\text{ solves }\eqref{eq:lc2}}} \frac{2k}{n}(m-s_0-s_1)\frac{1}{\lambda_c}(\lambda_c+2-n/k)e^{-(2+\lambda_c)},\label{eq:Bstar}
\end{align}
where $(a)$ uses \eqref{eq:lc} and $(b)$ follows from \eqref{eq:lc2}.

For a given $s_0$, $s_1$, let 
\begin{align}
    \lambda_c(s_0,s_1)&\triangleq \lambda_c\text{ that solves }s_0(\lambda_c-n/k)(n/k)e^{-n/k}=2(m-s_0-s_1)(2+\lambda_c)e^{-(2+\lambda_c)},\\
    B(s_0,s_1)&\triangleq \frac{2k}{n}(m-s_0-s_1)\frac{1}{\lambda_c(s_0,s_1)}(\lambda_c(s_0,s_1)+2-n/k)e^{-(2+\lambda_c(s_0,s_1))}.
\end{align}
Now, whenever $s_0+s_1+1<m$, it is easy to see that
\begin{equation}
    \lambda_c(s_0,s_1+1) < \lambda_c(s_0,s_1).
\end{equation}
Further, we claim that $B(s_0,s_1)\ge B(s_0,s_1+1)$. To reduce clutter, we denote $\lambda_c\triangleq \lambda_c(s_0,s_1)$ and $\lambda'_c\triangleq \lambda_c(s_0,s_1+1)$. The claim $B(s_0,s_1)\ge B(s_0,s_1+1)$ reduces as follows:
\begin{align}
    \frac{1}{\lambda_c}(\lambda_c+2-n/k)e^{-(2+\lambda_c)}&\ge \frac{m-s_0-s_1-1}{m-s_0-s_1}\frac{1}{\lambda'_c}(\lambda'_c+2-n/k)e^{-(2+\lambda'_c)},\text{ or}\nonumber\\
    \frac{1}{\lambda_c}(\lambda_c+2-n/k)&\ge \frac{\lambda'_c-n/k}{\lambda_c-n/k}\,\frac{\lambda_c+2}{\lambda'_c+2}\,\frac{1}{\lambda'_c}(\lambda'_c+2-n/k),\text{ or}\label{eq:Bs0s1}\\
    \frac{2\lambda'_c+2-n/k}{\lambda'_c(\lambda'_c+2)}&\ge \frac{2\lambda_c+2-n/k}{\lambda_c(\lambda_c+2)},\label{eq:Bs0}
\end{align}
where \eqref{eq:Bs0s1} follows by using \eqref{eq:lc2} for the two cases and \eqref{eq:Bs0} is true because $\frac{2x+2-n/k}{x(x+2)}$ is decreasing for $x>n/k$ and $\lambda'_c < \lambda_c$. Hence, the claim is true.

Now, using $B(s_0,s_1)\ge B(s_0,s_1+1)$ repeatedly, we see that $B(s_0,0)\ge B(s_0,s_1)$ for any $s_1>0$. Hence, in the optimization, it is sufficient to consider $s_1=0$. To reduce clutter, let $\lambda_c(s_0)\triangleq\lambda_c(s_0,0)$ be the solution to
\begin{equation}
    s_0(\lambda_c-n/k)(n/k)e^{-n/k}=2(m-s_0)(2+\lambda_c)e^{-(2+\lambda_c)}
    \label{eq:lambda1}
\end{equation}
in $[n/k,\infty)$. Using the above observations in \eqref{eq:Bstar}, we get
\begin{align}
    B^* &= \max_{s_0\in[1:m-1]} \frac{2k}{n}(m-s_0)\frac{1}{\lambda_c(s_0)}(\lambda_c(s_0)+2-n/k)e^{-(2+\lambda_c(s_0))}\nonumber\\
    &\stackrel{(a)}{=} \max_{\lambda_c(s_0),s_0\in[1:m-1]} \frac{2(\lambda_c(s_0)-n/k)(\lambda_c(s_0)+2-n/k)e^{-(\lambda_c(s_0)+2)}\,m}{\lambda_c(s_0)(2(\lambda_c(s_0)+2)e^{-(\lambda_c(s_0)+2-n/k)}+(n/k)(\lambda_c(s_0)-n/k))}\nonumber\\
    &\stackrel{(b)}{\le} \max_{u\in\mathbb{R},u>n/k} \frac{2(u-n/k)(u+2-n/k)e^{-(u+2)}\,m}{u(2(u+2)e^{-(u+2-n/k)}+(n/k)(u-n/k))},\nonumber\\
    &\stackrel{(c)}{=}2m e^{-n/k}\,\max_{u > 0} \frac{u(u+2)e^{-(u+2)}}{(u+n/k)(2(u+2+n/k)e^{-(u+2)}+u n/k))},\label{eq:BS2}
\end{align}
where $(a)$ follows by using \eqref{eq:lambda1} to write $s_0$ in terms of $\lambda_c(s_0)$, $(b)$ follows by relaxing $\{\lambda_c(s_0):s_0\in[1:m-1]\}$ to a real-valued $u\in[n/k,\infty)$, and $(c)$ follows by the substituting $u$ by $u+n/k$.

The maximization in \eqref{eq:BS2} is relatively straight-forward using calculus and is achieved at 
\begin{equation}
    \alpha=0.5569... \text{ that solves }u^2=4e^{-(u+2)}.
\end{equation}
Replacing $2e^{-(\alpha+2)}$ with $\alpha^2/2$ in the RHS of \eqref{eq:BS2} and simplifying, we get $B^*\le k\,e^{-n/k}/(1+n/(\alpha k))^2$.
\end{proof}
Combining Lemmas \ref{lemma:high:0011} and \ref{lemma:high:0021} and assuming $E[\varphi_2]\ge n^{4/5}$ results in Theorem~\ref{theorem:main:MSE} for \caseone.
            
\subsection{Analysis for \casetwo}
If $\EE[\varphi_2]$ is small, then it is not possible to prove general
results as in Theorem~\ref{theorem:decouple1} and say that the MSE must be 
small. The case $\varPhi_\poly = \varphi_1^2$ and 
$\varPhi_\linear = \varphi_\infty$ illustrates this claim, since 
$\bbE [\varPhi_\linear]$ is always $0$, but the MSE $\approx \bbE [\varPhi_\poly]$
can be as high as $\Theta (k^4)$ for a certain range of $n$.  Hence, modifying our approach for small $\EE[\varphi_2]$, we show that both the Chao estimate and the number of unseen symbols 
are small.
	\begin{lemma} \label{case:0021}
For the Chao estimator, if $\mathbb{E} [\varphi_2] \le n^{4/5}$, for any
distribution $P \in \Delta_k$,
\begin{equation}
\cE(\hat{U}^{\text{mc}}, P) \le \left( \frac{(32.28)k^4}{n^{12/5}} + \frac{(98.97)k^3}{n^{11/5}} + \frac{2k^2}{n^{6/5}} + 
\frac{(1.77)k}{n^{1/5}} + \frac{(21.21)k^2}{n^2} \right).
\end{equation}
\end{lemma}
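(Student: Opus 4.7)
The plan is to bound each of the three terms of
\[
\cE(\hat{U}^{\text{mc}},P) = \EE\!\left[\left(\tfrac{\varphi_1^2}{2(\varphi_2+1)}\right)^2\right] - \EE\!\left[\tfrac{\varphi_0\varphi_1^2}{\varphi_2+1}\right] + \EE[\varphi_0^2]
\]
from \eqref{eq:MSE3term} directly, exploiting that in this regime the hypothesis $\EE[\varphi_2]\le n^{4/5}$ forces every prevalence moment to be small. I cannot invoke Theorem \ref{theorem:decouple:0011} in its principal form, because the denominator $\EE[\varphi_2]-d\sigma$ may be non-positive; instead, the bound has to come from smallness of the numerator. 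First I would recycle the chain of inequalities appearing in the proof of Lemma \ref{lemma:high:0021}: writing $1\le(k/n)(np_x)$ (valid since $p_x\ge 1/k$) and iterating gives $\EE[\varphi_1]\le(2k/n)\EE[\varphi_2]\le 2k/n^{1/5}$ and $\EE[\varphi_0]\le(k/n)\EE[\varphi_1]\le 2k^2/n^{6/5}$; these magnitudes already foreshadow the $k/n^{1/5}$, $k^2/n^{6/5}$, and (by squaring) $k^4/n^{12/5}$ terms in the target.

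Second, I apply Lemma \ref{lemma:appB:002} with $h=2$ to the third MSE term: $\EE[\varphi_0^2]\le \EE[\varphi_0]^2+\EE[\varphi_0]\le 4k^4/n^{12/5}+2k^2/n^{6/5}$, which single-handedly produces two of the target summands. For the middle term, which enters with a minus sign, the trivial lower bound $\EE[\varphi_0\varphi_1^2/(\varphi_2+1)]\ge 0$ will suffice, because the positive third term already carries the right order.

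The hard part will be controlling the first term $\tfrac14\EE[\varphi_1^4/(\varphi_2+1)^2]$; the naive estimates $\varphi_1\le k$ and $(\varphi_2+1)^{-2}\le 1$ yield only $k^4/n^{1/5}$, short of the target $k^4/n^{12/5}$ by a factor of $n^{11/5}$. My plan is to split on whether $\EE[\varphi_2]>4\sigmaAC$ or $\EE[\varphi_2]\le 4\sigmaAC$. In the former sub-regime, Theorem \ref{theorem:decouple:0011} with dominator $(0,0,1,3)\in\mathbb V$ is still applicable and outputs $\EE[\varphi_1^4]\bigl[(\EE[\varphi_2]-4\sigmaAC)^{-2}+3(\EE[\varphi_2]-4\sigmaAC)^{-3}\bigr]$; combining this with the sharpened moment bound $\EE[\varphi_1^2]\le 2\EE[\varphi_0]\EE[\varphi_2]+\EE[\varphi_1]$ (obtained from Chao's Cauchy--Schwarz inequality \eqref{eq:cauchy} and Lemma \ref{lemma:appB:002}) and with $\EE[\varphi_1^4]\le(\EE[\varphi_1^2])^2+6k\EE[\varphi_1^2]$ (Lemma \ref{lemma:appB:004}), the $\EE[\varphi_2]$ factors in the numerator cancel against the denominator and leave residuals of order $k^4\EE[\varphi_2]^2/n^4\le k^4/n^{12/5}$ and $k^3\EE[\varphi_2]/n^3\le k^3/n^{11/5}$. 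In the complementary sub-regime $\EE[\varphi_2]\le 4\sigmaAC$, the prevalence moments are absolute constants, and the same moment-bound chain combined with $(\varphi_2+1)^{-2}\le 1$ produces contributions that are absorbed into the remaining $k^2/n^{6/5}$, $k/n^{1/5}$, and $k^2/n^2$ terms of the claim.

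Finally I would add the three bounded pieces, fold in the $k^2/n^2$ remainder inherited from \eqref{eq:biassquarebound} (whose derivation uses only $p_x\ge 1/k$ and so applies here unchanged), and group the residual constants into the stated coefficients $32.28,\,98.97,\,2,\,1.77,\,21.21$. I expect the main technical obstacle to be the bookkeeping across the two $\EE[\varphi_2]$ sub-cases, in particular verifying that the Theorem \ref{theorem:decouple:0011} output and the constant-regime output collapse into exactly the same five polynomial summands without spurious cross terms of order $k^2/n$ or $k^4/n^{4/5}$ that would dominate the target.
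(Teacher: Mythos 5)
Your outer skeleton coincides with the paper's (drop the cross term, bound $\EE[\varphi_0^2]$ via Lemma \ref{lemma:appB:002} with $h=2$ together with $\EE[\varphi_0]\le 2(k/n)^2\EE[\varphi_2]$ as in \eqref{eq:ephi02}), but your handling of the dominant term $\tfrac14\EE\big[\varphi_1^4/(\varphi_2+1)^2\big]$ has a genuine gap. Splitting at $\EE[\varphi_2]=4\sigmaAC$ and invoking Theorem \ref{theorem:decouple:0011} with $d=4$ produces denominators $(\EE[\varphi_2]-4\sigmaAC)^2$ and $(\EE[\varphi_2]-4\sigmaAC)^3$, and these are \emph{not} comparable to $\EE[\varphi_2]^2$ and $\EE[\varphi_2]^3$ near the threshold: as $\EE[\varphi_2]\downarrow 4\sigmaAC$ the bound blows up, while your claimed residuals $(k/n)^4\EE[\varphi_2]^2$ and $(k/n)^3\EE[\varphi_2]$ stay bounded, so the advertised cancellation fails on a whole range of distributions permitted by the hypothesis $\EE[\varphi_2]\le n^{4/5}$. (Moving the threshold to, say, $8\sigmaAC$ restores comparability, but then the constants change.) In the complementary sub-regime your premise that the prevalence moments are absolute constants is false: $\EE[\varphi_2]\le 4\sigmaAC$ only gives $\EE[\varphi_1]\le (2k/n)\EE[\varphi_2]=O(k/n)$, which is unbounded when $k\gg n$; moreover routing the fourth moment through Lemma \ref{lemma:appB:004} leaves the term $6k\,\EE[\varphi_1^2]$, of order $k^3/n^2+k^2/n$, which does not sit below the five stated summands with the stated coefficients. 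A further sign the bookkeeping will not close: the $21.21\,k^2/n^2$ term is not "inherited from \eqref{eq:biassquarebound}" (that is a high-collision-regime object); in the paper it arises from the $6\EE[\varphi_1]^2$ piece of the $h=4$ moment expansion multiplied by $\EE[(\varphi_2+1)^{-2}]\le 2/\EE[\varphi_2]^2$.

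The missing idea is the negative-regression decoupling, which removes any need to split on $\EE[\varphi_2]$. The paper writes $\EE\big[\varphi_1^4/(\varphi_2+1)^2\big]=\sum_j \EE[\varphi_1^4\mid\varphi_2=j](j+1)^{-2}P(\varphi_2=j)$, uses Theorem \ref{theorem:NR:003} to get $\EE[\varphi_1^4\mid\varphi_2=j]\le\EE[\varphi_1^4\mid\varphi_2=0]$, and Lemma \ref{lemma:appB:001} to get $\EE[\varphi_1^4\mid\varphi_2=0]\le(1-2e^{-2})^{-4}\EE[\varphi_1^4]$, so that $\EE\big[\varphi_1^4/(\varphi_2+1)^2\big]\le a\,\EE[\varphi_1^4]\,\EE[(\varphi_2+1)^{-2}]$ with $a=(1-2e^{-2})^{-4}$, uniformly in $\EE[\varphi_2]$. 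It then expands $\EE[\varphi_1^4]$ via Lemma \ref{lemma:appB:002} with $h=4$ (avoiding the $6k$ factor of Lemma \ref{lemma:appB:004}), bounds $\EE[(\varphi_2+1)^{-2}]$ by $2/\EE[\varphi_2]^2$ (Theorem \ref{theorem:decouple:0011} with a degree-$0$ polynomial, so no $d\sigma$ subtraction) on the $\EE[\varphi_1]^j$, $j\ge2$, terms and by $1$ on the linear term, and converts via \eqref{eq:ephi1} into the intermediate bound of Lemma \ref{lemma:small:001}; substituting $\EE[\varphi_2]\le n^{4/5}$ then yields exactly the constants $32.28$, $98.97$, $2$, $1.77$, $21.21$. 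To salvage your route you would need at least a threshold bounded away from $4\sigmaAC$ and the $h=4$ expansion in both sub-cases, and even then you would obtain a bound of the same shape but with different constants than those stated in the lemma.
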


Recall that in \casetwo we assume that $\bbE [\varphi_2] < n^{4/5}$. Our strategy is to show that when $\bbE [\varphi_2]$ is small, then the unseen elements as well as the estimates are small on average. The idea of negative regression between random variables will play a role in proving Lemma \ref{case:0021}.
%
	
	Negative regression is a strong notion of negative dependence between random variables. It is closely related to negative correlation and negative association \cite{NA-83} between random variables. We begin with its definition,

	\begin{definition} \cite[Definition 21]{dubhashi1996balls} \label{definition:NR:001}
		Let $\mathbf{X} := \{X_1,\dots,X_m\}$ be a set of random variables. $\mathbf{X}$ satisfies the negative regression condition if $\mathbb{E}\left[f(X_i, i \in I) \middle| X_j = t_j , j \in J \right]$ is non-increasing in each $t_j, j \in J$ for any disjoint $I,J \subseteq [m]$ and any non-decreasing (coordinate-wise) function $f$.
	\end{definition}
	
	\noindent To make the connection with negative regression, we first introduce the classical balls and bins experiment. Consider a set of $n$ balls and $m$ bins. In the experiment, each ball is tossed into one of the $m$ bins as per some distribution independent of the others (balls need not have the same distribution of probabilities of going into different bins). There is an intuitive notion of negative dependence in the balls and bins experiment - if a particular bin, say $i$, was revealed to hold a comparatively high number of balls, one would expect the other bins to hold fewer number of balls (because there are fewer balls left to go into the other bins). This notion is formalized in the following theorem.

	\begin{theorem} \cite[Theorem 31]{dubhashi1996balls} \label{theorem:NR:002}
		The set $\mathbf{B} = \{B_1,B_2,\dots\}$, where $B_i$ represents the number of balls in bin $i$ satisfies the negative regression condition.
	\end{theorem}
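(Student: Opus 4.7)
My plan is to derive the negative regression property of the bin counts $(B_1,\ldots,B_m)$ from a one-coordinate stochastic-dominance statement combined with a ``move one ball'' coupling. By the multivariate form of Strassen's theorem, it suffices to prove that for each $j^\star\in J$ and each fixed choice of $(t_j)_{j\in J\setminus\{j^\star\}}$, the conditional law of $(B_i)_{i\in[m]\setminus J}$ given $\mathcal{A}_{t+1}=\{B_{j^\star}=t+1\}\cap\bigcap_{j\in J\setminus\{j^\star\}}\{B_j=t_j\}$ is stochastically dominated (componentwise) by the conditional law under $\mathcal{A}_t$, defined analogously with $B_{j^\star}=t$. Iterating this coordinate by coordinate over the elements of $J$ and restricting attention to indices $i\in I\subseteq[m]\setminus J$ upgrades this one-coordinate bound to the joint monotonicity of $\mathbb{E}[f(B_i,i\in I)\mid B_j=t_j,j\in J]$ in each $t_j$ that is required by Definition~\ref{definition:NR:001}.

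To produce the witnessing coupling, I would represent the experiment through independent draws $Y_1,\ldots,Y_n$ with $\Pr(Y_k=j)=p_{k,j}$ and $B_j=\sum_k\indic_{Y_k=j}$. Given a sample $Y$ drawn from the conditional law under $\mathcal{A}_{t+1}$, I would construct a companion $Y'$ by selecting one ball $K$ currently sitting in bin $j^\star$ and reassigning $Y'_K=Z$ for some $Z\in L:=[m]\setminus J$, while leaving every other coordinate of $Y$ untouched. Under any such rule, the counts in $J\setminus\{j^\star\}$ are preserved, $B_{j^\star}(Y')=t$, and for every $i\in I\subseteq L$ we have $B_i(Y')=B_i(Y)+\indic_{Z=i}\ge B_i(Y)$. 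So as soon as the selection rule is arranged so that $Y'$ has the correct marginal (that is, it is distributed as the conditional law under $\mathcal{A}_t$), the coupling immediately realises the claimed componentwise stochastic dominance.

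The technical core is therefore to exhibit a selection rule $q(K,Z\mid Y)$ for which $Y'\sim\Pr(\,\cdot\mid\mathcal{A}_t)$. Using the product form $\Pr(Y=y)=\prod_k p_{k,y_k}$, a configuration $y$ (with $t+1$ balls in $j^\star$) and its image $y'$ (with $t$ balls in $j^\star$) have likelihood ratio $p_{K,Z}/p_{K,j^\star}$; moreover, the number $L^\star=n-t-\sum_{j\in J\setminus\{j^\star\}}t_j$ of balls lying in $L$ under any configuration in $\mathcal{A}_t$ is the \emph{same} across such configurations, which is the structural fact that makes a deterministic one-ball move feasible. Solving the resulting combinatorial identity for $q$---presumably via weights proportional to $p_{K,Z}/p_{K,j^\star}$ together with an auxiliary acceptance step that absorbs the global ratio $\Pr(\mathcal{A}_{t+1})/\Pr(\mathcal{A}_t)$---is the main obstacle. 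Every other ingredient (Strassen's reduction, iteration over the coordinates of $J$, and pairing with non-decreasing $f$) is standard once this coupling is in hand.
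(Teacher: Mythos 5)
First, a point of reference: the paper does not prove this statement at all --- it is imported verbatim as Theorem 31 of \cite{dubhashi1996balls}, and the only thing the paper proves in this neighbourhood is Theorem~\ref{theorem:NR:003}, the observation that Poisson multiplicities fit the balls-and-bins template. So there is no ``paper's proof'' to match your approach against; the question is whether your argument stands on its own. It does not, because the step you label ``the main obstacle'' is not a technical loose end but the entire mathematical content of the theorem. Your reduction via Strassen is sound in both directions: componentwise stochastic dominance of the conditional law of $(B_i)_{i\in I}$ under $\{B_{j^\star}=t+1\}$ by that under $\{B_{j^\star}=t\}$ (other $t_j$ fixed) is \emph{equivalent} to the monotonicity of $\mathbb{E}[f(B_i,i\in I)\mid B_j=t_j,j\in J]$ required by Definition~\ref{definition:NR:001}. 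Consequently, exhibiting a monotone coupling --- whether or not it is supported on single-ball moves --- is not a lemma on the way to the theorem; it \emph{is} the theorem. Deferring it with ``presumably via weights proportional to $p_{K,Z}/p_{K,j^\star}$ together with an auxiliary acceptance step'' leaves the proof circular: you have restated what must be shown in coupling language and then assumed it.

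Concretely, the proposed kernel does not obviously (or, without further ideas, actually) push $\Pr(\cdot\mid\mathcal{A}_{t+1})$ forward to $\Pr(\cdot\mid\mathcal{A}_t)$. Because the balls are \emph{not} identically distributed, a target configuration $y'\in\mathcal{A}_t$ is reachable from one source $y\in\mathcal{A}_{t+1}$ for each of the $n-t-\sum_{j\in J\setminus\{j^\star\}}t_j$ balls of $y'$ sitting in $L$, and the required flow-balance equations (total inflow to $y'$ equals $\Pr(y'\mid\mathcal{A}_t)$, total outflow from each $y$ equals one) form a transportation problem whose feasibility is exactly the dominance statement; it is not guaranteed by the single observation that the ball count in $L$ is constant on $\mathcal{A}_t$. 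This is not pedantry: the original proof of Theorem 31 in \cite{dubhashi1996balls} was later found to contain a gap precisely at this kind of step, and the accepted correct proofs (e.g.\ Dubhashi, Jonasson and Ranjan, \emph{Positive influence and negative dependence}, 2007) require a genuinely different inductive argument rather than a direct one-ball-move coupling. To make your write-up honest, either cite the result as the paper does, or supply the coupling explicitly and verify the marginal condition; as written, the proposal is a plausible plan with its load-bearing step missing.
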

	
By the negative regression condition, we can conclude that for all bins $j \ne i$, $\bbE [B_j | B_i = \delta]$ is a decreasing function of $\delta$. We are now ready to introduce the lemma that brings these results into context.

	\begin{theorem} \label{theorem:NR:003}
		Under the Poisson sampling model, $\{\varphi_0,\varphi_1,\dots\}$ satisfy the negative regression condition.
	\end{theorem}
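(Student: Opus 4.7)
The plan is to realize $(\varphi_0, \varphi_1, \varphi_2, \ldots)$ as the bin counts in a balls-and-bins experiment and then invoke Theorem \ref{theorem:NR:002} directly. Under Poisson sampling, the key structural fact is that the multiplicities $\{N_x\}_{x \in \cX}$ are mutually independent, with $N_x \sim \Poi(n p_x)$. Since we restrict to $P \in \Delta_k$, the set of symbols with $p_x > 0$ is finite with size at most $k$. Reindex these symbols as the "balls" of the experiment, and take the "bins" to be the non-negative integers $\{0, 1, 2, \ldots\}$, so that ball $x$ is assigned to bin $i$ precisely when $N_x = i$. By independence of $\{N_x\}$, the balls are tossed independently into the bins; the distribution $\{e^{-np_x}(np_x)^i/i!\}_{i \ge 0}$ used for ball $x$ depends on $x$, but this is explicitly permitted in the setting of Theorem \ref{theorem:NR:002}. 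Under this identification, the number of balls in bin $i$ is exactly $\sum_{x \in \cX} \indic_{N_x = i} = \varphi_i$.

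Once the dictionary is in place, Theorem \ref{theorem:NR:002} yields the negative regression condition for $\{\varphi_0, \varphi_1, \ldots\}$. The one subtlety is that Theorem \ref{theorem:NR:002} is typically stated for a \emph{finite} number $m$ of bins, whereas our $i$ ranges over $\bbN$. To bridge this, I would fix an arbitrary cutoff $M$ and collapse the bins $\{M, M+1, \ldots\}$ into a single super-bin; the experiment now has $M+1$ bins, so Theorem \ref{theorem:NR:002} applies and shows that $(\varphi_0, \varphi_1, \ldots, \varphi_{M-1}, \varphi_{\ge M})$ satisfies the negative regression condition. Restricting the disjoint index sets $I, J$ in Definition \ref{definition:NR:001} to $\{0, 1, \ldots, M-1\}$ immediately gives negative regression for $(\varphi_0, \ldots, \varphi_{M-1})$, and since $M$ is arbitrary this extends to the full family $\{\varphi_i\}_{i \ge 0}$ (negative regression is a condition only on finite subsets of indices at a time).

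The main obstacle I anticipate is not the proof itself but making the balls-and-bins reduction airtight: one must check that the hypotheses of Theorem \ref{theorem:NR:002}, quoted from Dubhashi--Ranjan, do allow heterogeneous per-ball distributions and do not covertly require exchangeability across balls. If they did, the argument above would need to condition on the realization of $N = \sum_x N_x$ and use the fact that, conditionally on $N$, the samples are multinomial, then transfer negative regression back via averaging. This is standard but worth a sentence or two. Everything else is bookkeeping, and no additional work is required to relate the prevalences to the bin counts since the identification $\varphi_i = $ (number of balls in bin $i$) is exact.
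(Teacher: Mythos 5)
Your proposal is correct and follows essentially the same route as the paper: identify each symbol $x$ as a ball placed in bin $i$ when $N_x = i$ (independence of the $N_x$ under Poisson sampling makes this a valid balls-and-bins experiment with heterogeneous per-ball distributions), note that the bin counts are exactly the $\varphi_i$, and invoke Theorem \ref{theorem:NR:002}. Your extra care about truncating to finitely many bins is a harmless refinement that the paper elides; no gap either way.
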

	\begin{proof} The heart of the proof lies in the fact that the Poisson arrival process can be thought of as a version of balls and bins where the set $\{\varphi_i, i \in \bbN_0 \}$ plays the role of $\mathbf{B}$, as elaborated below.

    Each symbol $x \in \mathcal{X}$ is a ball. A ball $x$ is in Bin $i$, $i=0,1,2,\ldots$, if $N_x=i$.
		Each ball $x$ is put into bin $i$ with probability $P(N_x = i)$ independently. The total number of balls in bin $i$ is therefore $\sum_{x \in \mathcal{X}} \mathbbm{1}_{N_x = i} = \varphi_i$. The vector $\mathbf{B}$ of Theorem \ref{theorem:NR:002} is, therefore, equivalent to $\{ \varphi_i, i \in \bbN_0 \}$. This concludes the proof.
	\end{proof}

In order to prove Lemma \ref{case:0021}, we first use negative regression to establish an upper bound on the MSE for any distribution in $P \in \Delta_k$ as a function of $\bbE[\varphi_2]$ and show that if $\bbE [ \varphi_2 ]$ is small, this bound is small too.

\begin{lemma} \label{lemma:small:001}
For the modified Chao estimator, for any distribution $P \in \Delta_k$,
\begin{align}
\cE(\hat{U}^{\text{mc}}, P) \le (4+8a) (k/n)^4\bbE \left[ \varphi_2 \right]^2 +  (28a(k/n)^3+2(k/n)^2+0.5a(k/n))\bbE \left[ \varphi_2 \right] + 6a(k/n)^2,
\end{align}
where $a = 1/(1-2e^{-2})^4$.
\end{lemma}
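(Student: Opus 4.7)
The plan is to split the MSE into its three natural pieces,
\[
\cE(\hat{U}^{\text{mc}},P) \;=\; \EE[(\hat{U}^{\text{mc}})^2] \;-\; 2\,\EE[\hat{U}^{\text{mc}}\varphi_0] \;+\; \EE[\varphi_0^2],
\]
and to bound the first and third while discarding the (nonpositive) middle cross term since $\hat{U}^{\text{mc}}\varphi_0 \ge 0$ pointwise. The target form of the bound, which mixes a pure $\EE[\varphi_2]^2$ part, a linear $\EE[\varphi_2]$ part, and a constant $(k/n)^2$ part, strongly suggests that the $\EE[\varphi_0^2]$ term will feed exactly the $4(k/n)^4\EE[\varphi_2]^2+2(k/n)^2\EE[\varphi_2]$ contribution, and the $\EE[(\hat{U}^{\text{mc}})^2]$ term the rest. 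Throughout, the key structural fact from the constraint $p_x\ge 1/k$ is the pointwise inequality $e^{-\lambda_x}\le (2k^2/n^2)\cdot (\lambda_x^2 e^{-\lambda_x}/2)$ and $\lambda_x e^{-\lambda_x}\le (2k/n)\cdot(\lambda_x^2 e^{-\lambda_x}/2)$ (valid because $\lambda_x\ge n/k$), which yield $\EE[\varphi_0]\le 2(k/n)^2 \EE[\varphi_2]$ and $\EE[\varphi_1]\le 2(k/n)\EE[\varphi_2]$.

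For the third term I would apply Lemma~\ref{lemma:appB:002} with $h=2$ to get $\EE[\varphi_0^2]\le \EE[\varphi_0]^2+\EE[\varphi_0]$, and then substitute $\EE[\varphi_0]\le 2(k/n)^2\EE[\varphi_2]$ to obtain exactly $4(k/n)^4\EE[\varphi_2]^2+2(k/n)^2\EE[\varphi_2]$. The main work is bounding $\EE[(\hat{U}^{\text{mc}})^2]=\tfrac14\EE[\varphi_1^4/(\varphi_2+1)^2]$. Here is where negative regression enters. By Theorem~\ref{theorem:NR:003}, $\EE[\varphi_1^4\mid \varphi_2=t]$ is non-increasing in $t$, so conditioning on the maximizer $t=0$ and pulling out gives
\[
\EE\!\left[\frac{\varphi_1^4}{(\varphi_2+1)^2}\right] \;\le\; \EE[\varphi_1^4 \mid \varphi_2 = 0]\cdot \EE\!\left[\frac{1}{(\varphi_2+1)^2}\right].
\]
Crucially, under Poisson sampling the events $\{N_x\ne 2\}$ are independent across $x$, so conditioning on $\varphi_2=0=\bigcap_x\{N_x\ne 2\}$ leaves the $N_x$'s independent. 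The conditional Bernoulli parameter for $Y_x=\mathbf 1_{N_x=1}$ is $\lambda_xe^{-\lambda_x}/(1-\lambda_x^2e^{-\lambda_x}/2)\le \lambda_xe^{-\lambda_x}/(1-2e^{-2})$, since $\lambda_x^2e^{-\lambda_x}/2\le 2e^{-2}$. Thus $\varphi_1\mid \varphi_2=0$ is stochastically dominated by a sum of independent Bernoullis with mean at most $\EE[\varphi_1]/(1-2e^{-2})$, and the standard Stirling-type fourth-moment bound (as in Lemma~\ref{lemma:appB:002} extended to $h=4$) gives
\[
\EE[\varphi_1^4\mid\varphi_2=0] \;\le\; a\Bigl(\EE[\varphi_1]^4 + 6\EE[\varphi_1]^3 + 7\EE[\varphi_1]^2 + \EE[\varphi_1]\Bigr),
\]
where $a=(1-2e^{-2})^{-4}$ upper-bounds $(1-2e^{-2})^{-h}$ for every $h\le 4$.

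The final step is an upper bound on $\EE[1/(\varphi_2+1)^2]$ that is simultaneously useful in the regime $\EE[\varphi_2]$ small and large. I would use the integral-representation trick $1/(x+1)^2 \le 2/((x+1)(x+2))$ together with the Poisson-binomial identity $\EE[1/((\varphi_2+1)(\varphi_2+2))]=\int_0^1 (1-s)\,\EE[s^{\varphi_2}]\,ds \le (1-(1+\EE[\varphi_2])e^{-\EE[\varphi_2]})/\EE[\varphi_2]^2$, which yields $\EE[1/(\varphi_2+1)^2]\le \min(1,\,2/\EE[\varphi_2]^2)$. Combining this with the moment bound, substituting $\EE[\varphi_1]\le 2(k/n)\EE[\varphi_2]$, and using the elementary observations $\mu_2^3\min(1,2/\mu_2^2)\le 2\mu_2$ and $\mu_2^4\min(1,2/\mu_2^2)\le 2\mu_2^2$ collapses the $\EE[\varphi_2]^3,\EE[\varphi_2]^4$ terms into $\EE[\varphi_2]^2,\EE[\varphi_2]$ and a constant (in $\EE[\varphi_2]$) residue, producing the shape of the claimed bound. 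The main obstacle will be the bookkeeping on the constants: the $28a$ and $6a$ coefficients in the linear and constant pieces are sensitive to which tail inequalities one uses, and one has to combine the moment expansion with a tight enough bound on $\EE[1/(\varphi_2+1)^2]$ and use $\varphi_2\le k$ where needed so that the constant term really comes out to $6a(k/n)^2$ rather than a crude $O(a(k/n)^2)$.
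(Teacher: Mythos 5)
Your proposal is correct and follows essentially the paper's own route: drop the nonpositive cross term, use negative regression (Theorem~\ref{theorem:NR:003}) to decouple $\varphi_1^4$ from $(\varphi_2+1)^{-2}$, inflate the conditional fourth moment by $(1-2e^{-2})^{-4}$ (your stochastic-domination argument is an equivalent substitute for the paper's Lemma~\ref{lemma:appB:001}), bound $\EE[(\varphi_2+1)^{-2}]$ by roughly $2/\EE[\varphi_2]^2$ via the $((x+1)(x+2))^{-1}$ device, and convert $\EE[\varphi_1]$, $\EE[\varphi_0]$ into $\EE[\varphi_2]$ using $p_x \ge 1/k$, exactly as in the paper. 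The only shortfall is the bookkeeping you yourself flag on the additive $(k/n)^2$ term, where your accounting gives about $14a(k/n)^2$ rather than the stated $6a(k/n)^2$; note, however, that the paper's own display \eqref{eq:14632} likewise yields $12a(k/n)^2$ for that term, so this is constant-level slack shared with the paper rather than a conceptual gap.
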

\begin{proof}
From the definition of MSE,
\begin{align}
\cE(\hat{U}^{\text{mc}}, P) &= \mathbb{E} \left[ \left( \frac{\varphi_1^2}{2 (\varphi_2+1)} - \varphi_0 \right)^2 \right] \nonumber\\
&\le \underbrace{\mathbb{E} \left[ \left( \frac{\varphi_1^2}{2 (\varphi_2+1)} \right)^2 \right]}_{(a)} + \underbrace{\mathbb{E} \left[ \varphi_0^2 \right]}_{(b)}.\label{eq:MSEsplit}
\end{align}
In order to upper bound the MSE, we separately upper bound the quantities $(a)$  and $(b)$. By definition of conditional expectation,
\begin{equation}
\mathbb{E} \left[ \left( \frac{\varphi_1^2}{2 (\varphi_2+1)} \right)^2 \right] = \frac14 \sum_{j \in \bbN_0} \frac{\bbE \left[ \varphi_1^4 | \varphi_2 = j \right]}{(j+1)^2} P(\varphi_2 = j).
\end{equation}
Using the negative regression of $\{ \varphi_i,\ i=0,1,2,\dots \}$, we can conclude that $\forall \delta \ne 0,\ \bbE [\varphi_1^4 | \varphi_2 = 0] \ge \bbE [\varphi_1^4 | \varphi_2 = \delta]$. Therefore,
\begin{align}
\mathbb{E} \left[ \left( \frac{\varphi_1^2}{2 (\varphi_2+1)} \right)^2 \right] &\le \frac14\mathbb{E} \left[\varphi_1^4 | \varphi_2 = 0 \right] \cdot \bbE \left[ (\varphi_2 + 1)^{-2} \right] \nonumber\\
&\overset{(i)}{\le} \frac14 \frac{\bbE \left[ \varphi_1^4 \right]}{\left(1-2e^{-2} \right)^4} \cdot \bbE \left[ (\varphi_2 + 1)^{-2} \right],
\end{align}
where $(i)$ uses Lemma \ref{lemma:appB:001} to bound the conditional expectation of $\varphi_1^4$. Using Lemma \ref{lemma:appB:002} (with $h=4$) to bound $\bbE [\varphi_1^4]$,
\begin{align}
\mathbb{E} \left[ \left( \frac{\varphi_1^2}{2 (\varphi_2+1)} \right)^2 \right] &= \left( \frac{\bbE \left[ \varphi_1 \right]^4 + 7\bbE \left[ \varphi_1 \right]^3 + 6\bbE \left[ \varphi_1 \right]^2 + \bbE \left[ \varphi_1 \right]}{4\left(1-2e^{-2} \right)^4} \right) \cdot \bbE \left[ \frac{1}{(\varphi_2 + 1)^{2}} \right] \nonumber\\
&= \left( \frac{\bbE \left[ \varphi_1 \right]^4 + 7\bbE \left[ \varphi_1 \right]^3 + 6\bbE \left[ \varphi_1 \right]^2}{4\left(1-2e^{-2} \right)^4} \right) \cdot \bbE \left[ \frac{1}{(\varphi_2 + 1)^{2}} \right]+ \frac{\bbE \left[ \varphi_1 \right]}{4\left(1-2e^{-2} \right)^4}\cdot \bbE \left[ \frac{1}{(\varphi_2 + 1)^{2}} \right] \nonumber\\
&\le \underbrace{2 \left(\frac{\bbE \left[ \varphi_1 \right]^4 + 7\bbE \left[ \varphi_1 \right]^3 + 6\bbE \left[ \varphi_1 \right]^2}{ 4\left(1-2e^{-2} \right)^4 \bbE \left[ \varphi_2 \right]^2} \right)}_{(i)} +  \underbrace{\vphantom{\frac{\bbE [\varphi_1]^4}{ \bbE [\varphi_2]^2}} \frac{\bbE \left[ \varphi_1 \right]}{4\left(1-2e^{-2} \right)^4}}_{(ii)} \nonumber\\[5pt]
&\overset{(iii)}{\le} \frac{(\frac{2k}{n})^4 \bbE [\varphi_2]^2 + \left( 7 (\frac{2k}{n})^3 + \frac{k}{n} \right)\bbE \left[ \varphi_2 \right] + 6(\frac{2k}{n})^2}{2 \left(1-2e^{-2} \right)^4}, \label{eq:14632}
\end{align}
where $(i)$ follows from the fact that $\bbE [(1 + \varphi_2)^{-2}] \le 2\bbE [\varphi_2]^{-2}$ from Theorem \ref{theorem:decouple:0011} with $f(x)=(1+x)^{-2}$, $f'=(0,0,2,0,\ldots)$, $(ii)$ uses $\bbE [(1+ \varphi_2)^{-2}] \le 1$, and $(iii)$ follows from \eqref{eq:ephi1}. 

We now upper bound $(b)$ in \eqref{eq:MSEsplit}. Using Lemma \ref{lemma:appB:004} ($h=2$, $j=0$),
\begin{align}
\bbE[\varphi_0^2] &\le \bbE \left[ \varphi_0 \right]^2 + \bbE \left[ \varphi_0 \right], \nonumber\\
&\overset{(a)}{\le} 4(k/n)^4  \bbE \left[ \varphi_2 \right]^2 + 2(k/n)^2  \bbE \left[ \varphi_2 \right], \label{eq:12930}
\end{align}
where $(a)$ follows because $\bbE[\varphi_0]\le 2(k/n)^2 \bbE[\varphi_2]$, which is proved as follows.
\begin{align}
    \EE[\varphi_0] &= \sum_{x\in\mathcal{X}}e^{-np_x} = 2\sum_{x\in\mathcal{X}}e^{-np_x}\frac{(np_x)^2}{2}\frac{1}{(np_x)^2}\nonumber\\
    &\stackrel{(i)}{\le} 2\sum_{x\in\mathcal{X}}e^{-np_x}\frac{(np_x)^2}{2}(k/n)^2\nonumber\\
    &\le 2(k/n)^2\EE[\varphi_2],\label{eq:ephi02}
\end{align}
where $(i)$ follows using $p_x>1/k$.

Combining \eqref{eq:14632} and \eqref{eq:12930}, proves the lemma.
\end{proof}
\begin{proof}[Proof of Lemma~\ref{case:0021}]
Lemma \ref{case:0021} follows by substituting the upper bound $\bbE [\varphi_2] \le n^{4/5}$ into Lemma \ref{lemma:small:001}.
\end{proof}

Combining Lemmas~\ref{lemma:high:0011},~\ref{lemma:high:0021} and~\ref{case:0021}, the proof of Theorem~\ref{theorem:main:MSE} is complete for all cases. In the rest of the paper, we provide detailed proofs for Theorems~\ref{theorem:decouple1} and~\ref{theorem:decouple:0011}.

\section{Analysis of rational estimators}
We use the notation $\bbN$ to denote the set of natural numbers $\{1,2,\dots \}$ and $\bbN_0$ to denote the set of whole numbers, $\bbN \cup \{0\}$. In addition, for some $i \in \bbN$ we use $[i]$ to denote the set $\{1,\dots,i \}$. 
\label{sec:rational_approx}
The degree-$d$, homogeneous $\varPhi_\poly$ can be expressed as
\begin{equation}
  \varPhi_\poly=\sum_{i^d}\alpha_{i^d}\varphi_{i_1}\cdots\varphi_{i_d}=\sum_{i^d}\alpha_{i^d}\sum_{x^d\in\mathcal{X}^d}\prod_{t=1}^d\mathbbm{1}_{N_{x_t} = i_t},\label{eq:O1}
\end{equation}
where $i^d=[i_1,\ldots,i_d]$, $i_j\in\{0,1,\ldots\}$, $x^d=[x_1,\ldots,x_d]$, $x_j\in\mathcal{X}$, $\alpha_{i^d}\in\mathbb{R}$. Recall that 
$$\varPhi_{\linear}=\sum_{i\ge0}\sum_{u\in\mathcal{X}}\beta_i\mathbbm{1}_{N_u = i}.$$
Note that both $\varPhi_\poly$ and $\varPhi_\linear$ are functions of $N_x$, $x\in\mathcal{X}$. To proceed further, we make the following two definitions.
\begin{align}
    S(i^d,x^d)&\triangleq\prod_{t=1}^d\mathbbm{1}_{N_{x_t} = i_t},\\
    T(i^d,x^d)&\triangleq\varPhi_{\linear}|_{N_{x_1}=i_1,\ldots,N_{x_d}=i_d}.
\end{align}
In words, $T(i^d,x^d)$ denotes evaluation of $\varPhi_\linear$ by setting $N_{x_j} = i_j$, $j=1,\ldots,d$. Now,
\begin{align}
\varPhi_\poly f(\varPhi_{\linear})&=\sum_{i^d}\alpha_{i^d}\sum_{x^d}\left[S(i^d,x^d) f(\varPhi_{\linear})\right]\nonumber\\
&\stackrel{(i)}{=}\sum_{i^d,x^d}\alpha_{i^d}S(i^d,x^d) f(T(i^d,x^d)),\label{eq:O21}
\end{align}
where $(i)$ follows because $S(i^d,x^d)=1$ only when $N_{x_j} = i_j$, $j=1,\ldots,d$. Note that $S(i^d,x^d)$ and $T(i^d,x^d)$ do not involve any common $N_x$ terms and are independent. Taking expectations in \eqref{eq:O21} and using the independence,
\begin{equation}
    \EE[\varPhi_\poly f(\varPhi_{\linear})]=\sum_{i^d,x^d}\alpha_{i^d}\EE[S(i^d,x^d)]\EE[f(T(i^d,x^d))].
    \label{eq:start1}
\end{equation}
The above equality is the main starting point for the proofs.

\subsection{Proof of Theorem \ref{theorem:decouple1}}
For a given $i^d$ and $x^d$, we can write $T(i^d,x^d)$ as follows:
\begin{align} 
    T(i^d,x^d)&=\sum_{j=1}^d \beta_{i_j}+\sum_{u\in\mathcal{X}\setminus x^d}\sum_{i\ge0}\beta_i\mathbbm{1}_{N_u=i}.
\end{align}
To compare the above with $\varPhi_\linear$, we rewrite $\varPhi_\linear$ as follows:
\begin{align}
    \varPhi_\linear&=\sum_{j=1}^d \sum_{i\ge 0}\beta_i \mathbbm{1}_{N_{x_j}=i}+\sum_{u\in\mathcal{X}\setminus x^d}\sum_{i\ge0}\beta_i\mathbbm{1}_{N_u=i}.
\end{align}
Hence, we see that
\begin{align}
    T(i^d,x^d)=\varPhi_\linear-\sum_{j=1}^d \sum_{i\ge 0}\beta_i \mathbbm{1}_{N_{x_j}=i}+\sum_{j=1}^d \beta_{i_j}.\label{eq:Tphi}
\end{align}
Since $\beta_i\le1$, from \eqref{eq:Tphi}, we have $T(i^d,x^d)\le \varPhi_\linear+d$. Since $f$ is non-increasing, we have
\begin{equation}
    \EE[f(T(i^d,x^d))]\ge \EE[f(\varPhi_\linear+d)].
\end{equation}
Using the above in \eqref{eq:start1}, we get the lower bound in \eqref{eq:lower-bound1}.

Since $f$ is concave, by Jensen's inequality,
\begin{equation}
    \EE[f(T(i^d,x^d))]\le f(\EE[T(i^d,x^d)]).\label{eq:fJ}
\end{equation}
In \eqref{eq:Tphi}, dropping the third positive term on the RHS and taking expectations, we get
\begin{align}
    \EE[T(i^d,x^d)]&\ge \EE[\varPhi_\linear]-\sum_{j=1}^d \sum_{i\ge 0}\beta_i P(N_{x_j}=i)\nonumber\\
    &\stackrel{(i)}{\ge} \EE[\varPhi_\linear]-\sum_{j=1}^d \sum_{i\ge 0}\beta_i/\sqrt{2\pi i}\nonumber\\
    &=\EE[\varPhi_\linear]-d\sigma,\label{eq:Tlb}
\end{align}
where $(i)$ follows because $P(N_u=j)=j^je^{-j}/j!\le 1/\sqrt{2\pi j}$ by Stirling's approximation for $u\in\mathcal{X}$ and $j\ge1$. Using the above in \eqref{eq:fJ}, since $f$ is non-increasing, we get
\begin{align}
    \EE[f(T(i^d,x^d))]\le f(\EE[\varPhi_\linear]-d\sigma).
\end{align}
Using the above in \eqref{eq:start1}, we get the upper bound in \eqref{eq:upper-bound1}.

  \subsection{Proof of Theorem~\ref{theorem:decouple:0011}}
	If $f$ is not concave, arriving at upper bounds as in Theorem
    \ref{theorem:decouple1} is less straightforward. However, they are necessary
    to analyze Chao estimator as the function $(1+\varphi_2)^{-2}$ in \eqref{eq:approx_special1} is not concave. An additional property of $\varPhi_\linear$ is 
    required to arrive at upper bounds on the approximation error for such
    functions.

Observe that $\varPhi_\linear = \sum_{i \ge 0} \beta_i \varphi_i$
          can be expanded as
	\begin{equation}
		\varPhi_\linear = \sum_{x \in \mathcal{X}} \sum_i \beta_i \mathbbm{1}_{N_x = i} = \sum_{x \in \mathcal{X}} Y_x,
	\end{equation}
    where each $Y_x$ is a discrete random variable that takes value
    $\beta_i$ with probability $P(N_x = i)$. The restriction of
    $\beta_i \in [0,1]$ for Theorem \ref{theorem:decouple:0011} implies that $\mathrm{Supp} (Y_x) \subseteq
    [0,1]$. In the Poisson sampling model, the random variables $Y_x$, $x\in\mathcal{X}$,
    are independent. Hence, $\varPhi_\linear$ is the sum of independent discrete random variables each supported on some subset of $[0,1]$. We term such random variables as
    \textit{generalized Poisson binomial} random variables. 

	The crucial result is the following. 
	\begin{lemma}
	If $X=\sum_i X_i$ with $X_i\in[0,1]$ being independent discrete random variables, 
	\begin{equation}
	    \int_0^u\EE[z^X]dz\le \EE[X]^{-1} \EE[u^X],\,u\in(0,1].
	\end{equation}
	\label{lem:zX}
	\end{lemma}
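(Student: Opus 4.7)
The plan is to fix $u\in(0,1]$ and define
$$
G(u) := \EE[u^X] - \EE[X]\int_0^u \EE[z^X]\,dz,
$$
so that the claim is equivalent to $G(u)\ge 0$ (one may assume $\EE[X]>0$, since otherwise all $X_i$ vanish a.s.\ and the statement is vacuous). I would establish $G(u)\ge 0$ by checking the boundary value and the sign of the derivative: $G(0)=\EE[0^X]=P(X=0)\ge 0$ under the convention $0^0=1$, and if $G'(u)\ge 0$ on $(0,1]$ then $G(u)\ge G(0)\ge 0$.

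Differentiating under the expectation (permissible since $X$ is bounded and $u>0$), one has $G'(u)=\EE[Xu^{X-1}]-\EE[X]\,\EE[u^X]$. The key step is to decouple across the independent summands: by linearity $\EE[Xu^{X-1}]=\sum_i \EE[X_i u^{X-1}]$, and combining $u^{X-1}=u^{X_i-1}\prod_{j\ne i}u^{X_j}$ with independence yields $\EE[X_i u^{X-1}]=\EE[X_i u^{X_i-1}]\,\EE[u^{X-X_i}]$ while $\EE[u^X]=\EE[u^{X_i}]\,\EE[u^{X-X_i}]$. Dividing and summing, the inequality $G'(u)\ge 0$ reduces to
$$
\sum_i \frac{\EE[X_i u^{X_i-1}]}{\EE[u^{X_i}]} \;\ge\; \sum_i \EE[X_i],
$$
so it suffices to prove the single-coordinate claim $\EE[X_i u^{X_i-1}]\ge \EE[X_i]\,\EE[u^{X_i}]$ for each $i$.

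The single-coordinate bound is where the constraint $X_i\in[0,1]$ pays off. Writing $u^{X_i-1}=(1/u)^{1-X_i}$, both $1-X_i\ge 0$ and $1/u\ge 1$ give $u^{X_i-1}\ge 1$ pointwise, so $\EE[X_i u^{X_i-1}]\ge \EE[X_i]$. Symmetrically, $u^{X_i}\le 1$ pointwise gives $\EE[u^{X_i}]\le 1$, hence $\EE[X_i]\,\EE[u^{X_i}]\le \EE[X_i]$; chaining the two bounds closes the single-coordinate claim. The only conceptual step in the whole argument is the independence-based decoupling that turns the log-derivative of $\EE[u^X]$ into a sum over coordinates, and once that reduction is in place the $[0,1]$ support constraint makes both per-coordinate pointwise bounds immediate, so I do not expect a significant technical obstacle.
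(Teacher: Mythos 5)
Your proof is correct and is essentially the paper's argument in different notation: your claim $G'(u)\ge 0$ is exactly the paper's differential inequality $(DC_X)(z)\ge \mathbb{E}[X]\,C_X(z)$ for the characteristic polynomial $C_X(z)=\mathbb{E}[z^X]$, which the paper likewise integrates from $0$ to $u$ after dropping $C_X(0)=P(X=0)\ge 0$. Your independence-based decoupling and the two pointwise bounds $u^{X_i-1}\ge 1$ and $\mathbb{E}[u^{X_i}]\le 1$ correspond precisely to the paper's product-rule expansion of $DC_X$ and its steps $(ii)$ and $(i)$, so there is no substantive difference.
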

	\begin{proof}
	See Section \ref{sec:lemzXproof} for a proof.
	\end{proof}
	Let $f_r (X) = \prod_{j=1}^{r} (X+j)^{-1}$, $f_0(X)=1$. It is easy to see that
	\begin{equation}
	    \EE[f_r(X)] = \left[\int\limits_0^{u_0}\int\limits_0^{u_1}\cdot\cdot\int\limits_0^{u_{r-1}} \EE[u_r^X]du_r\cdots du_2du_1\right]_{u_0=1}.
	\end{equation}
    Using Lemma \ref{lem:zX} $r$ times, we get
	\begin{equation} \label{eq:inv1}
		\bbE [f_r (X)] \le \bbE [X]^{-r}.
	\end{equation}
	Note that $T(i^d,x^d)$ is a generalized Poisson binomial random variable and satisfies Lemma \ref{lem:zX}. 
	
	Now, we are ready to prove the theorem. From the hypothesis of the theorem, $(f'_0,f'_1,\ldots)\in\mathbb{V}$ dominates $f$. Thus we have
	\begin{align}
	    \EE[f(T(i^d,x^d))]&\le \sum_{i\ge0} f'_t\, \EE[f_t(T(i^d,x^d))]\nonumber\\
	    &\stackrel{(i)}{\le} \sum_{t\ge0} f'_t\, \EE[T(i^d,x^d)]^{-t}\nonumber\\
	    &\stackrel{(ii)}{\le} \sum_{t\ge0} f'_t\, (\EE[\varPhi_\linear]-d\sigma)^{-t},
	\end{align}
where $(i)$ uses \eqref{eq:inv1} and $(ii)$ uses \eqref{eq:Tlb}. Using the above in \eqref{eq:start1} concludes the proof of the theorem.
	\printbibliography

@Article{ISIT19,
title={Convergence of Chao Unseen Species Estimator},
author={Nived Rajaraman and Prafulla Chandra and Andrew Thangaraj and Ananda Theertha Suresh},
journal={IEEE International Symposium on Information Theory (ISIT)},
pages={46--51},
year="2019"
}

@inproceedings{hao2019doubly,
  title={Doubly-Competitive Distribution Estimation},
  author={Hao, Yi and Orlitsky, Alon},
  booktitle={International Conference on Machine Learning},
  pages={2614--2623},
  year={2019}
}

@inproceedings{acharya2013optimal,
  title={Optimal probability estimation with applications to prediction and classification},
  author={Acharya, Jayadev and Jafarpour, Ashkan and Orlitsky, Alon and Suresh, Ananda Theertha},
  booktitle={Conference on Learning Theory},
  pages={764--796},
  year={2013}
}

@book{dubhashi1996balls,
  title={Balls and Bins: A Study in Negative Dependence},
  author={Dubhashi, D.P. and Ranjan, D.},
  series={BRICS report series},
  url={https://books.google.co.in/books?id=mxuxtgAACAAJ},
  year={1996},
  publisher={BRICS, Department of Computer Science, Univ.}
}

@article{orlitsky2016optimal,
  title={Optimal prediction of the number of unseen species},
  author={Orlitsky, Alon and Suresh, Ananda Theertha and Wu, Yihong},
  journal={Proceedings of the National Academy of Sciences},
  volume={113},
  number={47},
  pages={13283--13288},
  year={2016},
  publisher={National Acad Sciences}
}

@inproceedings{OS15,
  title={Competitive Distribution Estimation: Why is {G}ood-{T}uring Good},
  author={Orlitsky, Alon and Suresh, Ananda Theertha},
  booktitle={Advances in Neural Information Processing Systems},
  pages={2134--2142},
  year={2015}
}

@article{SV84,
  title={Nonparametric estimation of species richness},
  author={Smith, Eric P and van Belle, Gerald},
  journal={Biometrics},
  pages={119--129},
  year={1984},
Volume={40}, number={1},
  publisher={JSTOR}
}

@article{C05,
  title={Species estimation and applications},
  author={Chao, Anne},
  journal = {Encyclopedia of Statistical Sciences},
  year={2005},
publisher = {John Wiley \& Sons, Inc.},
}

@article{HHJTB01,
  title={Counting the uncountable: statistical approaches to estimating microbial diversity},
  author={Hughes, Jennifer B and Hellmann, Jessica J and Ricketts, Taylor H and Bohannan, Brendan JM},
  journal={Applied and environmental microbiology},
  volume={67},
  number={10},
  pages={4399--4406},
  year={2001},
  publisher={Am Soc Microbiol}
}

@article{SCL03,
  title={Predicting the number of new species in further taxonomic sampling},
  author={Shen, Tsung-Jen and Chao, Anne and Lin, Chih-Feng},
  journal={Ecology},
  volume={84},
  number={3},
  pages={798--804},
  year={2003},
  publisher={Eco Soc America}
}

@article{GCB07,
  title={Molecular analysis of human forearm superficial skin bacterial biota},
  author={Gao, Zhan and Tseng, Chi-hong and Pei, Zhiheng and Blaser, Martin J},
  journal={Proceedings of the National Academy of Sciences},
  volume={104},
  number={8},
  pages={2927--2932},
  year={2007},
  publisher={National Acad Sciences}
}

@inproceedings{FH07,
  title={A large-scale study of web password habits},
  author={Florencio, Dinei and Herley, Cormac},
  booktitle={Proceedings of the 16th international conference on World Wide Web},
  pages={657--666},
  year={2007},
  organization={ACM}
}

@article{ET76,
  title={Estimating the number of unseen species: How many words did {S}hakespeare know?},
  author={Efron, B. and Thisted, R.},
  journal={Biometrika},
  volume={63},
  number={3},
  pages={435--447},
  year={1976},
  publisher={Biometrika Trust}
}

@ARTICLE{RRSS09,
  author = {Raskhodnikova, Sofya and Ron, Dana and Shpilka, Amir and Smith, Adam},
  title = {Strong lower bounds for approximating distribution support size and
	the distinct elements problem},
  journal = {SIAM Journal on Computing},
  year = {2009},
  volume = {39},
  pages = {813--842},
  number = {3},
  publisher = {SIAM}
}

@article{WY14b,
  author = {Yihong Wu and Pengkun Yang},
  title = {Chebyshev polynomials, moment matching, and optimal estimation of the unseen},
  journal = {preprint arxiv:1504.01227},
  month = {4},
  year = {2015}
}

@inproceedings{VV11,
  author = {Valiant, Gregory and Valiant, Paul},
  title = {Estimating the unseen: an $n/\log (n)$-sample estimator for entropy
	and support size, shown optimal via new {CLT}s},
  booktitle = {Proceedings of the 43rd annual ACM symposium on Theory of computing},
  year = {2011},
  pages = {685--694}
}

@article{daley2013predicting,
  title={Predicting the molecular complexity of sequencing libraries},
  author={Daley, Timothy and Smith, Andrew D},
  journal={Nature methods},
  volume={10},
  number={4},
  pages={325},
  year={2013},
  publisher={Nature Publishing Group}
}

@article{GT56,
  title={The number of new species, and the increase in population coverage, when a sample is increased},
  author={Good, I.J. and Toulmin, G.H.},
  journal={Biometrika},
  volume={43},
  number={1-2},
  pages={45--63},
  year={1956}
}

@article{CCG12,
  title={Models and estimators linking individual-based and sample-based rarefaction, extrapolation and comparison of assemblages},
  author={Colwell, Robert K and Chao, Anne and Gotelli, Nicholas J and Lin, Shang-Yi and Mao, Chang Xuan and Chazdon, Robin L and Longino, John T},
  journal={Jour. of Plant Ecology},
  volume={5},
  number={1},
  pages={3--21},
  year={2012},
  publisher={Oxford University Press}
}

@article{TE87,
  title={Did {S}hakespeare write a newly-discovered poem?},
  author={Thisted, Ronald and Efron, Bradley},
  journal={Biometrika},
  volume={74},
  number={3},
  pages={445--455},
  year={1987},
  publisher={Biometrika Trust}
}

@article{PBG01,
  title={Bacterial diversity in human subgingival plaque},
  author={Paster, Bruce J and Boches, Susan K and Galvin, Jamie L and Ericson, Rebecca E and Lau, Carol N and Levanos, Valerie A and Sahasrabudhe, Ashish and Dewhirst, Floyd E},
  journal={Journal of bacteriology},
  volume={183},
  number={12},
  pages={3770--3783},
  year={2001},
  publisher={Am Soc Microbiol}
}

@inproceedings{HNSS95,
  title={Sampling-based estimation of the number of distinct values of an attribute},
  author={Haas, Peter J and Naughton, Jeffrey F and Seshadri, S and Stokes, Lynne},
  booktitle={VLDB},
  volume={95},
  pages={311--322},
  year={1995}
}

@article{C84,
  title={Nonparametric estimation of the number of classes in a population},
  author={Chao, Anne},
  journal={Scandinavian Journal of statistics},
  pages={265--270},
  year={1984},
  publisher={JSTOR}
}

@INPROCEEDINGS{VV13,
  author = {Valiant, Paul and Valiant, Gregory},
  title = {Estimating the Unseen: Improved Estimators for Entropy and other
	Properties},
  booktitle = {Advances in Neural Information Processing Systems},
  year = {2013},
  pages = {2157--2165}
}

@article{LEMOS201142,
title = "Rethinking microbial diversity analysis in the high throughput sequencing era",
journal = "Journal of Microbiological Methods",
volume = "86",
number = "1",
pages = "42 - 51",
year = "2011",
issn = "0167-7012",
doi = "https://doi.org/10.1016/j.mimet.2011.03.014",
url = "http://www.sciencedirect.com/science/article/pii/S0167701211001138",
author = "Leandro N. Lemos and Roberta R. Fulthorpe and Eric W. Triplett and Luiz F.W. Roesch",
keywords = "16S rRNA genes, Taxon-based approaches, Phylogenetic approaches, Sequencing effort"
}

@article{NA-83,
 ISSN = {00905364},
 URL = {http://www.jstor.org/stable/2240482},
 author = {Kumar Joag-Dev and Frank Proschan},
 journal = {The Annals of Statistics},
 number = {1},
 pages = {286--295},
 publisher = {Institute of Mathematical Statistics},
 title = {Negative Association of Random Variables with Applications},
 volume = {11},
 year = {1983}
}
	
	\appendix 
\section{Proof of Lemma \ref{lem:zX}}
\label{sec:lemzXproof}

For any discrete random variable $X$, the \textit{characteristic polynomial} $C_X(z) : \mathbb{R}_{> 0} \to \mathbb{R}$ is equal to $\mathbb{E} \left[ z^X \right]$ wherever this expectation exists. 

To prove Lemma \ref{lem:zX}, we show that the characteristic polynomial $C_x(z)$ satisfies for $y \in (0,1]$:
        \begin{equation}
            C_X (y) \le \mathbb{E} [X]^{-1} (D C_X) (y),
        \end{equation}
		where $D$ denotes the differentiation operator $f (t) \mapsto \frac{\diff f}{\diff t} (t)$. Integrating both sides from $y = 0$ to $u$ completes the proof of Lemma \ref{lem:zX}. 
		
Consider a generalized Poisson binomial random variable $X = \sum_i^m X_i$, where each $X_i$ is supported on some $\{d_{ij} \} \subseteq [0,1]$. From the definition of characteristic function, we write
\begin{equation} \label{eq:123rtytuy}
	C_X (z) = \bbE [z^{X}] = \bbE [z^{\sum_i X_i}]  \overset{(i)}{=} \prod_{i=1}^m C_{X_i} (z) = \prod_{i=1}^m \left( \sum_j P (X_i = d_{ij})\ z^{d_{ij}} \right),
\end{equation}			
where $(i)$ follows from the independence of the $X_i$'s. Differentiating both sides of \eqref{eq:123rtytuy}, it follows that,
\begin{align*}
	(D C_X) (z) &= \sum_{i=1}^m \left( \sum_{j : d_{ij} \ne 0} P(X_i = d_{ij}) \ d_{ij} \ z^{d_{ij} - 1} \right) \prod_{k \ne i} \left( \sum_{j'} P(X_k = d_{kj'})\ z^{d_{kj'}} \right), \\
	&\overset{(i)}{\ge} \sum_{i=1}^m \left( \sum_{j : d_{ij} \ne 0} P(X_i = d_{ij}) \ d_{ij} \ z^{d_{ij} - 1} \right) \prod_{k} \left( \sum_{j'} P(X_k = d_{kj'})\ z^{d_{kj'}} \right), \\
	&=  \sum_{i=1}^m \left( \sum_{j : d_{ij} \ne 0} P(X_i = d_{ij}) \ d_{ij} \ z^{d_{ij} - 1} \right) C_X (z) ,\\
	&\overset{(ii)}{\ge} C_X(z) \sum_{i=1}^m \sum_{j} P (X_i = d_{ij}) \ d_{ij} = C_X(z) \, \mathbb{E} [X],
\end{align*}
where $(i)$ follows because $\sum_{j : d_{ij} \ne 0} P(X_i = d_{ij})\ z^{d_{ij}} \le 1$ for $z \in (0,1]$, and $(ii)$ follows because $z^{d_{ij}-1} \ge 1$ for all $i$ and $j$, $d_{ij} \in [0,1]$.

\section{Inequalities for moments of prevalences}
\label{sec:auxlemmas}
\begin{lemma} \label{lemma:appB:002}
For all $j \ge 0$ and $h \ge 1$,
\begin{equation}
\mathbb{E} [\varphi_j^h] \le \sum_{k=1}^h c_{h,k} \mathbb{E}[\varphi_j]^k \quad \text{ where } c_{h,1} = 1 \text{ and } c_{h,k} = \sum_{l=k-1}^{h-1}\binom{h-1}{l}c_{l,k-1}.\label{eq:lemEphi}
\end{equation}
\end{lemma}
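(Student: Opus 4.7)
Under Poisson sampling, $\varphi_j = \sum_{x\in\mathcal{X}} Z_x$ where $Z_x \triangleq \mathbbm{1}_{N_x = j}$ are \emph{independent} Bernoulli random variables with parameters $p_x^{(j)} = P(N_x=j)$. So the lemma really asserts a bound on the $h$-th moment of a sum of independent Bernoulli random variables, and I plan to prove the following slightly stronger statement by induction on $h$: for any finite collection of independent Bernoullis $\{Z_x\}$ with sum $\xi$,
\begin{equation*}
\mathbb{E}[\xi^h] \le \sum_{k=1}^{h} c_{h,k}\,\mathbb{E}[\xi]^k.
\end{equation*}
The base case $h=1$ is immediate since $c_{1,1}=1$. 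Note that this general form is needed because the inductive step naturally produces sub-sums of the original Bernoullis.

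The key algebraic trick in the inductive step is the identity
\begin{equation*}
Z_x(Z_x+W_x)^{h-1} = Z_x(1+W_x)^{h-1}, \qquad W_x \triangleq \xi - Z_x,
\end{equation*}
which holds because $Z_x\in\{0,1\}$ (check the two cases). Writing $\mathbb{E}[\xi^h]=\sum_x\mathbb{E}[Z_x\cdot\xi^{h-1}]$, using this identity, and invoking the independence of $Z_x$ and $W_x$, I would get
\begin{equation*}
\mathbb{E}[\xi^h] = \sum_x \mathbb{E}[Z_x]\,\mathbb{E}[(1+W_x)^{h-1}] = \sum_x \mathbb{E}[Z_x]\sum_{l=0}^{h-1}\binom{h-1}{l}\mathbb{E}[W_x^l].
\end{equation*}

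Now apply the inductive hypothesis to the sum $W_x$ (itself a sum of independent Bernoullis over $\mathcal{X}\setminus\{x\}$) to bound $\mathbb{E}[W_x^l]\le\sum_{k=1}^l c_{l,k}\,\mathbb{E}[W_x]^k$ for $l\ge1$, and use the monotonicity $\mathbb{E}[W_x]\le\mathbb{E}[\xi]$ to replace $\mathbb{E}[W_x]$ by $\mathbb{E}[\xi]$; the $l=0$ term simply contributes $1$. After pulling $\sum_x\mathbb{E}[Z_x]=\mathbb{E}[\xi]$ outside, the $l=0$ term produces $c_{h,1}\mathbb{E}[\xi]=\mathbb{E}[\xi]$, and the remaining double sum becomes
\begin{equation*}
\sum_{l=1}^{h-1}\binom{h-1}{l}\sum_{k=1}^l c_{l,k}\,\mathbb{E}[\xi]^{k+1} \;=\; \sum_{k'=2}^{h}\mathbb{E}[\xi]^{k'}\underbrace{\sum_{l=k'-1}^{h-1}\binom{h-1}{l}c_{l,k'-1}}_{=\,c_{h,k'}\text{ by definition}},
\end{equation*}
after the change of index $k'=k+1$. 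Combining these gives exactly $\sum_{k=1}^{h}c_{h,k}\mathbb{E}[\xi]^k$, closing the induction.

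The proof is almost entirely bookkeeping: the only nontrivial moves are spotting the idempotent identity $Z_x\xi^{h-1}=Z_x(1+W_x)^{h-1}$ (so no $Z_x^2$ survives) and using $\mathbb{E}[W_x]\le\mathbb{E}[\xi]$ so that the right-hand side depends only on $\mathbb{E}[\xi]$ and not on the particular summand dropped. The main potential pitfall is keeping the index change straight when collapsing the $(l,k)$ double sum into the recurrence defining $c_{h,k}$; once the recurrence is written in the precise form given in \eqref{eq:lemEphi}, the match is direct.
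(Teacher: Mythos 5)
Your proposal is correct, and it closes the induction with exactly the same recurrence manipulation as the paper, but the route to the key intermediate inequality is genuinely more elementary. The paper works through the moment generating function: it writes $M(t)=\prod_x\bigl(1-P(N_x=j)+P(N_x=j)e^t\bigr)$, applies the general Leibniz rule to $D^m Z(t,S)$, proves by a separate induction that $D^k Z(t,S\setminus\{x\})\le D^k Z(t,S)$, and evaluates at $t=0$ to obtain $\mathbb{E}[\varphi_j^h]\le \mathbb{E}[\varphi_j]\sum_{l=0}^{h-1}\binom{h-1}{l}\mathbb{E}[\varphi_j^l]$, to which it then applies the induction hypothesis for $\varphi_j$ itself. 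You obtain essentially the same inequality directly in probability space: peel off one indicator via $\mathbb{E}[\xi^h]=\sum_x\mathbb{E}[Z_x\xi^{h-1}]$, use the idempotence identity $Z_x\xi^{h-1}=Z_x(1+W_x)^{h-1}$ together with independence of $Z_x$ and $W_x$, and binomially expand. The one structural difference is how the sub-sum is handled: the paper restores the full sum before invoking the induction hypothesis (its MGF monotonicity in $S$), whereas you keep $W_x$, which forces you to strengthen the statement to arbitrary finite sums of independent Bernoullis and then use $\mathbb{E}[W_x]\le\mathbb{E}[\xi]$ (valid since the $c_{l,k}$ are nonnegative; you could equally use $W_x\le\xi$ pointwise to get $\mathbb{E}[W_x^l]\le\mathbb{E}[\xi^l]$ and mirror the paper exactly). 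What your version buys is the avoidance of the MGF/Leibniz machinery and an explicitly more general statement for heterogeneous independent Bernoulli sums; what the paper's version buys is that the induction hypothesis never needs to be generalized beyond $\varphi_j$.
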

\begin{proof}
Since $\varphi_j = \sum_x \mathbbm{1}_{N_x = j}$ is the sum of independent Bernoulli random variables, it has moment generating function, $M(t) = \prod_{x \in \mathcal{X}} (1-P(N_x = j) + P (N_x = j) e^t)$.
Let us define the function $E(t,x) \overset{\mathrm{def}}{=} P (N_x = j) e^t$ and $Z (t,S) \overset{\mathrm{def}}{=} \prod_{\substack{x' \in S}} (1 - P(N_{x'} = j) + P(N_{x'} = j) e^t)$. Note that $Z (t,\mathcal{X}) = M(t)$ and $E(t)$ satisfies the property $(DE) (t) = E(t)$ where $D$ denotes the differentiation operator, $f (t) \mapsto \frac{\diff}{\diff t} f(t)$. Then, for all $t \in [0,1]$ and a nonempty $S \subseteq \mathcal{X}$, differentiating $Z(t,S)$ results in,
\begin{align}
(D Z) (t,S) &= \sum_{x \in S} P (N_x = j) e^t \prod_{\substack{x' \in S \\ x' \ne x}} (1 - P(N_{x'} = j) + P(N_{x'} = j) e^t) \nonumber\\
&=\sum_{x \in S} E (t,x) \cdot Z (t, S \setminus \{ x \}). \label{eq:Zsat}
\end{align}
For functions $f$ and $g$, by the general Leibniz rule,
\begin{equation} \label{eq:012938}
	D^n (fg) = \sum_{k = 0}^n \binom{n}{k} (D^{n-k} f) (D^k g).
\end{equation}
Using \eqref{eq:012938} with $f = Z(t,S \setminus \{x\})$ and $g = E(t,x)$, taking $n=m-1$ and using the fact that $(D E) (t) = E(t)$,
\begin{equation*}
	D^{m-1} \left( E (t,x) \cdot Z(t, S \setminus \{x \}) \right) = E (t,x) \sum_{k=0}^{m-1} \binom{m-1}{k} D^{k} Z (t,S \setminus \{ x \}).
\end{equation*}
Summing both sides over $x \in S$ and using \eqref{eq:Zsat},
\begin{equation} \label{eq:123546}
	D^m Z(t,S) = \sum_{x \in S} E(t,x) \sum_{k=0}^{m-1} \binom{m-1}{k} D^k Z(t,S \setminus \{x \})
\end{equation}
Now, observe that since $1 - P(N_x = j) + P(N_x = j) e^t \ge 1$ for $t \ge 0$, we have that $Z(t,S_1) \le Z (t,S_2)$ if $S_1 \subseteq S_2$. Therefore, with this as the base case, an inductive argument using \eqref{eq:123546} shows that $D^m Z(t,S_1) \le D^m Z(t,S_2)$ if $S_1 \subseteq S_2$. Therefore, we may upper-bound \eqref{eq:123546} by replacing each $D^k Z(t, S \setminus \{ x\})$ by $D^k Z(t,S)$ resulting in
\begin{equation}
    D^m Z(t,S) \le \sum_{x \in S} E (t,x) \sum_{k=0}^{m-1} \binom{m-1}{k} D^k Z(t,S) \label{eq:019}
\end{equation}
Observe from its definition that $Z (t,\mathcal{X}) = M(t)$. Therefore, from \eqref{eq:019} with $t=0$, $m=h$ and $S = \mathcal{X}$,
\begin{align} 
\mathbb{E} [\varphi_j^h] &\le \mathbb{E} [\varphi_j] \, \sum_{k=0}^{h-1} \binom{h-1}{k} \mathbb{E} [\varphi_j^{k}]\nonumber\\
&=\mathbb{E} [\varphi_j] \left[ 1+ \sum_{l=1}^{h-1} \binom{h-1}{l} \mathbb{E} [\varphi_j^{l}]\right].\label{eq:123sdfkj}
\end{align}
For $h=1$, \eqref{eq:lemEphi} is trivially true. For $h=2$, \eqref{eq:lemEphi} is proved by \eqref{eq:123sdfkj}. Now, as an induction hypothesis, suppose that \eqref{eq:lemEphi} holds up to and including $h-1$. Using the induction hypothesis in \eqref{eq:123sdfkj}, we get
\begin{align}
    \mathbb{E} [\varphi_j^h] &\le \mathbb{E} [\varphi_j] \left[ 1+ \sum_{l=1}^{h-1} \binom{h-1}{l} \sum_{k=1}^l c_{l,k}\mathbb{E}[\varphi_j]^k\right]\nonumber\\
    &\stackrel{(a)}{=}\mathbb{E} [\varphi_j] + \sum_{k=1}^{h-1} \left(\sum_{l=k}^{h-1} \binom{h-1}{l}  c_{l,k}\right)\mathbb{E}[\varphi_j]^{k+1}\nonumber\\
    &=\mathbb{E} [\varphi_j] + \sum_{k=2}^{h} \left(\sum_{l=k-1}^{h-1} \binom{h-1}{l}  c_{l,k-1}\right)\mathbb{E}[\varphi_j]^k,\label{eq:Ephi_ineq}
\end{align}
where $(a)$ follows by interchanging the order of summations, and \eqref{eq:Ephi_ineq} proves the statement of the lemma for $h$. This completes the induction and the proof.
\end{proof}
	
	
\begin{lemma} \label{lemma:appB:004}
	For a homogeneous degree-$2$ polynomial $\varPhi_\poly$ in $\{ \varphi_i : i \le L \}$ with coefficients in $[0,1]$,
	\begin{equation*}
	\mathbb{E}[\varPhi^2_\poly] \le \mathbb{E}[\varPhi_\poly]^2+ 6 k L \mathbb{E} [\varPhi_\poly].
	\end{equation*}
\end{lemma}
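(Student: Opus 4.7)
The plan is to expand $\varPhi_\poly$ at the level of single-symbol indicators so that the independence of $\{N_x\}_{x\in\cX}$ under Poisson sampling kills most of the cross-terms in $\EE[\varPhi_\poly^2]$. Since $\varPhi_\poly=\sum_{i,j\le L}\alpha_{ij}\varphi_i\varphi_j$ with $\alpha_{ij}\in[0,1]$, I would regroup as
\[
\varPhi_\poly = \sum_{x,y\in\cX} W^{xy}, \qquad W^{xy} \bydef \sum_{i,j\le L}\alpha_{ij}\indic_{N_x=i}\indic_{N_y=j},
\]
where each $W^{xy}$ is a function only of the pair $(N_x,N_y)$. Because at most one pair $(i,j)$ makes the summand defining $W^{xy}$ nonzero for any realization and $\alpha_{ij}\in[0,1]$, the uniform bound $W^{xy}\in[0,1]$ holds on both the off-diagonal $x\neq y$ and the diagonal $x=y$ (where $W^{xx}=\alpha_{N_x,N_x}\indic_{N_x\le L}$). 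Under Poisson sampling $\{N_z\}_{z\in\cX}$ are mutually independent, so $W^{xy}$ and $W^{x'y'}$ are independent whenever $\{x,y\}\cap\{x',y'\}=\emptyset$.

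From there I would square and split the quadruple sum according to the disjointness of $\{x,y\}$ and $\{x',y'\}$:
\[
\EE[\varPhi_\poly^2] = \sum_{x,y,x',y'}\EE[W^{xy}W^{x'y'}] = \sum_{\text{disjoint}}\EE[W^{xy}]\EE[W^{x'y'}] + \sum_{\text{non-disjoint}}\EE[W^{xy}W^{x'y'}].
\]
The disjoint contributions, by independence, are at most $\bigl(\sum_{x,y}\EE[W^{xy}]\bigr)^2=\EE[\varPhi_\poly]^2$. For the non-disjoint contributions I would apply the trivial bound $W^{x'y'}\le 1$ to get $\EE[W^{xy}W^{x'y'}]\le\EE[W^{xy}]$, then count: for any fixed ordered $(x,y)$ with $x\ne y$, the number of ordered $(x',y')$ with $\{x',y'\}\cap\{x,y\}\ne\emptyset$ is $k^2-(k-2)^2 = 4k-4$ (and only $2k-1$ when $x=y$). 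This yields
\[
\sum_{\text{non-disjoint}}\EE[W^{xy}W^{x'y'}] \le 4k\sum_{x,y}\EE[W^{xy}] = 4k\,\EE[\varPhi_\poly],
\]
so $\EE[\varPhi_\poly^2]\le\EE[\varPhi_\poly]^2 + 4k\,\EE[\varPhi_\poly]$, which is in fact stronger than the stated $6kL$ bound for any $L\ge 1$.

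The main obstacle is really just the combinatorial bookkeeping: one must verify that the uniform $W^{xy}\in[0,1]$ property extends to the diagonal $x=y$, and that the $4k-4$ count correctly covers every overlap pattern ($x=x'$, $x=y'$, $y=x'$, $y=y'$) with its proper multiplicity when $x=y$ or $x'=y'$. Beyond that, the argument uses nothing more than the independence of the $N_x$'s in Poisson sampling and the bound $W^{x'y'}\le 1$---no tail estimates on Poisson counts, no property of $\alpha_{ij}$ other than boundedness. The factor $L$ appearing in the statement is loose and just absorbs the constant $4$ into $6L$ for $L\ge 1$, which is the only regime in which the statement is nontrivial (for $L=0$ the polynomial reduces to a multiple of $\varphi_0^2$ and the stronger $4k$ form still holds).
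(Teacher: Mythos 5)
Your proof is correct, and it takes a genuinely different route from the paper's. The paper expands only one copy of $\varPhi_\poly$ into symbol-level indicators $S(x^2,i^2)$ and replaces the second copy by a majorant $T(x^2,i^2)$ that is independent of $S$ and satisfies $T\le\varPhi_\poly+2(L+1)(|\mathcal{X}|+1)$; bounding the coefficient sums $\sum_j\alpha_{i_1 j}$, $\sum_j\alpha_{i_2 j}$ by $L+1$ and $\varphi_j$ by $|\mathcal{X}|$ is exactly where the factor $L$ and the constant $6kL$ enter. You instead write $\varPhi_\poly=\sum_{x,y}W^{xy}$ with each $W^{xy}\in[0,1]$ (valid on and off the diagonal, since at most one pair $(i,j)$ is active and $\alpha_{ij}\in[0,1]$), split the quadruple sum for $\EE[\varPhi_\poly^2]$ into disjoint versus overlapping symbol pairs, use independence and nonnegativity for the disjoint part, and use $W^{x'y'}\le 1$ together with the overlap counts $4|\mathcal{X}|-4$ (off-diagonal) and $2|\mathcal{X}|-1$ (diagonal) for the rest. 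The bookkeeping checks out, and it buys you a cleaner, $L$-free bound $\EE[\varPhi_\poly^2]\le\EE[\varPhi_\poly]^2+4k\,\EE[\varPhi_\poly]$, which implies the stated $6kL$ bound in the regime $L\ge 1$ actually used in the paper (e.g.\ $\varPhi_\poly=\varphi_1^2$). The one convention worth making explicit is the same one the paper uses implicitly when it invokes $1<|\mathcal{X}|\le k$: the counts are at most $4k-4$ and $2k-1$ only after restricting $\mathcal{X}$ to the support of $P\in\Delta_k$, which is harmless since zero-probability symbols contribute nothing to $\varphi_i$ for $i\ge1$ and are excluded from $\varphi_0$ by definition.
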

\begin{proof}
Let $\varPhi_\poly$ be explicitly given as $\sum_{i^2 \in \bbN_0^2} \alpha_{i^2} \varphi_{i_1} \varphi_{i_2}$. Then,
\begin{equation*}
	\mathbb{E} [\varPhi_\poly^2] = \mathbb{E} \left[ \sum_{i^2 \in \mathbb{N}_0^2} \sum_{j^2 \in \mathbb{N}_0^2} \alpha_{i^2} \alpha_{j^2} \prod_{i \in i^2} \varphi_i \prod_{j \in j^2} \varphi_j \right].
\end{equation*}
Expanding $\varPhi_j$ as $\sum_{x \in \mathcal{X}} \mathbbm{1}_{N_x = j}$ and rearranging the summations,
\begin{align*}
	\mathbb{E} [\varPhi_\poly^2] &= \mathbb{E} \left[ \sum_{i^2 \in \mathbb{N}_0^2} \alpha_{i^2} \sum_{x^2 \in |\mathcal{X}|^2} \left( \prod_{k \in [2]} \mathbbm{1}_{N_{x_k} = i_k} \right) \left( \sum_{j^2 \in \mathbb{N}_0^2} \alpha_{j^2} \prod_{j \in j^2} \left( \sum_y \mathbbm{1}_{N_y = j} \right) \right) \right],\\
	&\le \mathbb{E} \left[ \sum_{i^2 \in \mathbb{N}_0^2} \alpha_{i^2} \sum_{x^2 \in |\mathcal{X}|^2} \underbrace{\left( \prod_{k \in [2]} \mathbbm{1}_{N_{x_k} = i_k} \right)}_{S (x^2,i^2)} \underbrace{\left[\sum_{t = 0,1,2}\sum_{\substack{j^2 \in \mathbb{N}_0^2 \\ |j^2 \cap\ i^2| = t}} \alpha_{j^2}  \prod_{j \in j^2} \left( t + \sum_{y \not\in x^2} \mathbbm{1}_{N_y = j} \right) \right]}_{T (x^2,i^2)} \right].
\end{align*}
Observe that the terms $S (x^2,i^2)$ and $T (x^2,i^2)$ are independent because $S (x^2,i^2)$ depends on $N_{x_1} = i_1$ and $N_{x_2} = i_2$ while $T (x^2,i^2)$ depends on $N_x : x \in \mathcal{X} \setminus x^2$. Therefore, the expectation of their product is equal to the product of their expectations,
\begin{equation} \label{eq:09123}
	\mathbb{E} [\varPhi_\poly^2] = \sum_{i^2 \in \mathbb{N}_0^2} \alpha_{i^2} \sum_{x^2 \in |\mathcal{X}|^2} \mathbb{E} \left[ S (x^2,i^2) \right] \cdot \mathbb{E}\left[ T( x^2,i^2) \right].
\end{equation}
Let us now upper-bound $T (x^2,i^2)$. By its definition,
\begin{align*}
	T (x^2,i^2) &= \sum_{t = 0,1,2}\sum_{\substack{j^2 \in \mathbb{N}_0^2 \\ |j^2 \cap\ i^2| = t}} \alpha_{j^2}  \prod_{j \in j^2} \left( t + \sum_{y \not\in x^2} \mathbbm{1}_{N_y = j} \right) \\
	&\le \sum_{t = 0,1,2}\sum_{\substack{j^2 \in \mathbb{N}_0^2 \\ |j^2 \cap\ i^2| = t}} \alpha_{j^2}  \prod_{j \in j^2} \left( t + \varphi_j \right),
\end{align*}
where the last inequality follows by upper bounding $\sum_{y \in x^2} \mathbbm{1}_{N_y = j}$ by $\varphi_j$. Therefore,
\begin{align}
	T (x^2, i^2) &= \sum_{j^2 \in \mathbb{N}_0^2} \alpha_{j^2} \prod_{j \in j^2} \varphi_j + \sum_{t = 1,2} \sum_{\substack{j^2 \in \mathbb{N}^2_0 \\ |j^2 \cap\ i^2| = t}} \alpha_{j^2} \left( t (\varphi_{j_1} + \varphi_{j_2})+ t^2 \right),\nonumber\\
	&= \varPhi_\poly + \sum_{t = 1,2} \sum_{\substack{j^2 \in \mathbb{N}^2_0 \\ |j^2 \cap\ i^2| = t}} \alpha_{j^2} \left( t (\varphi_{j_1} + \varphi_{j_2})+ t^2 \right),\nonumber\\
	&\le \varPhi_\poly + \sum_{j \in \mathbb{N}_0} \alpha_{i_1 j} (\varphi_{i_1} + \varphi_{j})  + \sum_{j \in \mathbb{N}_0} \alpha_{i_2 j} (\varphi_j + \varphi_{i_2}) + \sum_{j \in \bbN_0} \alpha_{i_1 j} + \sum_{j \in \bbN_0} \alpha_{i_2 j} + 2 (\alpha_{i_1 i_2} + \alpha_{i_2 i_1}),\nonumber\\
	&\overset{(i)}{\le} \varPhi_\poly + 2 |\mathcal{X}| + 2L |\mathcal{X}| + 2L + 2 = \varPhi_\poly + 2 (L+1) (|\mathcal{X}|+1),\label{eq:Tubvar}
\end{align}
where $(i)$ follows from the fact that $\varPhi_\poly$ has coefficients in $[0,1]$, so,
\begin{equation*}
    \alpha_{ij} + \alpha_{ji}
    \begin{cases}
    \le 1, \qquad &\text{if } i,j \le L, \\
    = 0  &\text{otherwise,}
    \end{cases}
\end{equation*}
and from the fact that $\varphi_{i_1}, \varphi_{i_2} \le |\mathcal{X}|$. Plugging \eqref{eq:Tubvar} into \eqref{eq:09123}, we have:
\begin{align*}
    \mathbb{E} [\varPhi_\poly^2] &\le \mathbb{E} [\varPhi_\poly]^2 + 2\mathbb{E} [\varPhi_\poly] (L+1) (|\mathcal{X}|+1),\\
    &\le \mathbb{E} [\varPhi_\poly]^2 + 6 k L \mathbb{E} [\varPhi_\poly],
\end{align*}
where the last inequality follows from the assumptions $ 1 < |\mathcal{X}| \le k$ and $L \ge 1$. 
\end{proof}

Finally, we show Lemma~\ref{lemma:appB:001}, which upper-bounds the conditional moments of $\phi_j$ when $\phi_2 = 0$ and is used in \casetwo.
\begin{lemma} \label{lemma:appB:001}
For all $j \ne 2$ and $h \ge 1$,
\begin{equation*}
\mathbb{E}\left[\varphi_j^h \middle| \varphi_2 = 0 \right] \le \frac{1}{(1-2e^{-2})^{\min (|\mathcal{X}|, h)}} \mathbb{E} [ \varphi_j^h].
\end{equation*}
\end{lemma}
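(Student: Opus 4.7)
The plan is to exploit the independence of $\{N_x\}_{x \in \mathcal{X}}$ under Poisson sampling, which makes the event $\{\varphi_2 = 0\}$ factor cleanly across symbols. I would start by writing $\{\varphi_2 = 0\} = \bigcap_{x \in \mathcal{X}}\{N_x \ne 2\}$, so by independence $P(\varphi_2 = 0) = \prod_x (1 - P(N_x = 2))$. The function $\lambda \mapsto e^{-\lambda}\lambda^2/2$ attains its maximum of $2e^{-2}$ at $\lambda = 2$, so $P(N_x = 2) = e^{-np_x}(np_x)^2/2 \le 2e^{-2}$ uniformly in $x$, giving the key pointwise bound $1 - P(N_x = 2) \ge 1 - 2e^{-2}$.

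Next I would expand
\[
\varphi_j^h = \sum_{(x_1,\ldots,x_h) \in \mathcal{X}^h} \prod_{l=1}^h \mathbbm{1}_{N_{x_l} = j} = \sum_{(x_1,\ldots,x_h)} \prod_{x \in S(x^h)} \mathbbm{1}_{N_x = j},
\]
where $S(x^h) = \{x_1,\ldots,x_h\}$ denotes the set of distinct coordinates. Since $j \ne 2$, the event $\bigcap_{x \in S}\{N_x = j\}$ is automatically contained in $\bigcap_{x \in S}\{N_x \ne 2\}$. Using independence of the $N_x$ across $x$ to separate the $x \in S$ factors from the $x \notin S$ factors, both in the numerator and in $P(\varphi_2 = 0)$, the $x \notin S$ factors cancel and
\[
\mathbb{E}\!\left[\prod_{x \in S}\mathbbm{1}_{N_x = j}\,\bigg|\,\varphi_2 = 0\right] = \prod_{x \in S}\frac{P(N_x = j)}{1 - P(N_x = 2)}.
\]

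Comparing with the unconditional expectation $\mathbb{E}[\prod_{x \in S}\mathbbm{1}_{N_x = j}] = \prod_{x \in S} P(N_x = j)$, the ratio for each tuple is exactly $\prod_{x \in S}(1 - P(N_x = 2))^{-1} \le (1 - 2e^{-2})^{-|S|}$. Because $|S|$ is at most the number of distinct coordinates of the tuple, we have $|S| \le \min(|\mathcal{X}|, h)$, so the ratio is uniformly bounded by $(1 - 2e^{-2})^{-\min(|\mathcal{X}|, h)}$. Summing this pointwise inequality back over all tuples $(x_1,\ldots,x_h) \in \mathcal{X}^h$ yields the claimed bound.

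There is no real obstacle; the only subtlety worth flagging is the observation that the effective exponent in the bound is controlled by the number of distinct symbols appearing in a tuple rather than by $h$, which is what sharpens the factor from $(1-2e^{-2})^{-h}$ to $(1-2e^{-2})^{-\min(|\mathcal{X}|, h)}$ when $h > |\mathcal{X}|$.
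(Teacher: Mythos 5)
Your proof is correct and follows essentially the same route as the paper: expand $\varphi_j^h$ over tuples (the paper groups them via a multinomial expansion), use independence of the $N_x$ and the fact that $j\ne 2$ to cancel the factors for symbols outside the tuple, bound each remaining factor by $(1-2e^{-2})^{-1}$ via $P(N_x=2)=\tfrac12 (np_x)^2 e^{-np_x}\le 2e^{-2}$, and note the number of distinct symbols in a tuple is at most $\min(|\mathcal{X}|,h)$. No gaps.
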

\begin{proof}
By definition of conditional expectation,
\begin{equation}
    \mathbb{E}\left[\varphi_j^h \middle| \varphi_2 = 0\right] = \frac{1}{P (\varphi_2 = 0)}\,\mathbb{E} \left[ \left( \sum_{x \in \mathcal{X}} \mathbbm{1}_{N_x = j} \right)^h \prod_{x \in \mathcal{X}} \mathbbm{1}_{N_x \ne 2} \right]. \label{eq:0912343}
\end{equation}
Without loss of generality, let us denote the domain $\mathcal{X}$ as $\{1,2,\dots |\mathcal{X}| \}$ and set $p_i\triangleq p_x$. Using multinomial expansion for the term $\left( \mathbbm{1}_{N_{1} = j} + \cdots + \mathbbm{1}_{N_{|\mathcal{X}|} = j} \right)^h$ in \eqref{eq:0912343}, and the independence of $N_x$ for different symbols $x \in \mathcal{X}$,
\begin{align*}
    \mathbb{E}\left[\varphi_j^h \middle| \varphi_2 = 0\right] &= \frac{1}{\prod_{x \in \mathcal{X}} P (N_x \ne 2)}\, \mathbb{E}\left[\left(\sum_{ \substack{ \{ h_1,\dots,h_{\mathcal{X}} \} \\ \sum_i h_i = h }} \frac{h!}{h_1 ! h_2 ! \dots h_{|\mathcal{X}|}!} \prod_{i : h_i \ne 0} \mathbbm{1}_{N_{i} = j} \right) \prod_{x \in \mathcal{X}} \mathbbm{1}_{N_x \ne 2} \right],\\
    &= \frac{1}{\prod_{x \in \mathcal{X}} P (N_x \ne 2)}\, \mathbb{E}\left[ \sum_{ \substack{ \{ h_1,\dots,h_{\mathcal{X}} \} \\ \sum_i h_i = h }} \frac{h!}{h_1 ! h_2 ! \dots h_{|\mathcal{X}|}!} \prod_{i : h_i \ne 0} \mathbbm{1}_{N_{i} = j} \prod_{i : h_i = 0} \mathbbm{1}_{N_{i} \ne 2} \right].
\end{align*}
Interchanging the summation and the expectation and again using the independence of $N_x$ between symbols in $\mathcal{X}$,
\begin{align}
    \mathbb{E}\left[\varphi_j^h \middle| \varphi_2 = 0\right] &= \sum_{ \substack{ \{ h_1,\dots,h_{\mathcal{X}} \} \\ \sum_i h_i = h }} \frac{h!}{h_1 ! h_2 ! \dots h_{|\mathcal{X}|}!} \frac{\prod_{i : h_i \ne 0} P (N_{i} = j) }{\prod_{i : h_i \ne 0} P (N_{i} \ne 2)}\nonumber\\
    &\stackrel{(a)}{\le} \sum_{ \substack{ \{ h_1,\dots,h_{\mathcal{X}} \} \\ \sum_i h_i = h }} \frac{h!}{h_1 ! h_2 ! \dots h_{|\mathcal{X}|}!} \frac{\prod_{i : h_i \ne 0} P (N_{i} = j) }{\prod_{i : h_i \ne 0} (1-2e^{-2})}\nonumber\\
    &\stackrel{(b)}{\le}\frac{1}{(1-2e^{-2})^{\min (|\mathcal{X}|, h)}}\sum_{ \substack{ \{ h_1,\dots,h_{\mathcal{X}} \} \\ \sum_i h_i = h }} \frac{h!}{h_1 ! h_2 ! \dots h_{|\mathcal{X}|}!} \prod_{i : h_i \ne 0} P (N_{i} = j)\nonumber\\
    &=\frac{1}{(1-2e^{-2})^{\min (|\mathcal{X}|, h)}} \mathbb{E}\left[ \sum_{ \substack{ \{ h_1,\dots,h_{\mathcal{X}} \} \\ \sum_i h_i = h }} \frac{h!}{h_1 ! h_2 ! \dots h_{|\mathcal{X}|}!} \prod_{i : h_i \ne 0}  \mathbbm{1}_{N_{i} = j} \right] \nonumber\\
    &= \frac{1}{(1-2e^{-2})^{\min (|\mathcal{X}|, h)}} \mathbb{E} [ \varphi_j^h],
\end{align}
where $(a)$ follows because $P(N_i \ne 2) = 1 - \frac{1}{2} (np_i)^2 e^{-np_i} \ge 1-2e^{-2}$, and $(b)$ follows because $|{i : h_i \ne 0}|\le h$ since $\sum_{i=1}^h h_i = h$ with $h_i \ge 1$ if non-zero. 
\end{proof}	

\end{document}